\newtheorem{thm}[equation]{Theorem}
\newtheorem{lem}[equation]{Lemma}
\newtheorem{cor}[equation]{Corollary}
\newtheorem{prop}[equation]{Proposition}
\newtheorem{defn}[equation]{Definition}
\newtheorem{rem}[equation]{Remark}
\def\sideremark#1{\ifvmode\leavevmode\fi\vadjust{\vbox to0pt{\vss% the remark
 \hbox to 0pt{\hskip\hsize\hskip1em%                          will appear only
\vbox{\hsize2cm\tiny\raggedright\pretolerance10000 %          on the side
 \noindent #1\hfill}\hss}\vbox to8pt{\vfil}\vss}}} %          in 2cm
\newenvironment{proof}[1][Proof]{\noindent\textbf{#1.} }{\ \rule{0.5em}{0.5em}}
\begin{document}
\title{Locally symmetric homogeneous Finsler spaces\footnote{The first author is supported by NSFC (no. 10671096, 10971104) and SRFDP of China}}
\author{Shaoqiang Deng\,$^1$ and Joseph A. Wolf\,$^2$\\
\\
$^1$ School of Mathematical Sciences and LPMC\\
Nankai University, Tianjin 300071\\ People's Republic of China\\
email: dengsq@nankai.edu.cn\\
\\
$^2$ Department of Mathematics\\ University of California, Berkeley\\
  Berkeley, CA 94720-3840, USA\\
email: jawolf@math.berkeley.edu}
\date{}

\maketitle

\begin{abstract}
Let $(M, F)$ be a connected Finsler space and $d$ the distance function of
$(M, F)$. A Clifford translation is an isometry $\rho$ of $(M, F)$ of
constant displacement, in other words such
that $d(x, \rho(x))$ is a constant function on $M$. In this paper we
consider a connected simply connected symmetric Finsler space and a
discrete subgroup $\Gamma$ of the full group of isometries.  We prove that
the quotient manifold $(M, F)/\Gamma$ is a homogeneous Finsler space
if and only if $\Gamma$ consists of Clifford translations of $(M, F)$.
In the process of the proof of the main theorem, we classify  all the Clifford
translations of symmetric Finsler spaces.

\textbf{AMS Subject Classification (2010)}: 22E46, 53C30, 53C35, 53C60.

\textbf{Key words}: Symmetric Finsler spaces, affine symmetric Berwald spaces, Clifford translations, homogeneous Finsler spaces.
\end{abstract}

\section{Introduction}
\setcounter{equation}{0}

In this paper we give a sufficient and necessary condition for
a locally symmetric Finsler space to be globally homogeneous.
Specifically, we prove
\begin{thm}\label{maintheorem}
Let $\Gamma$ be a properly discontinuous group of isometries of a connected
simply connected globally symmetric Finsler space $(M, F)$.  Then
$(M, F)/\Gamma$ is a homogeneous Finsler space if and only if
$\Gamma$ consists of Clifford translations.  Further, if $(M, F)/\Gamma$ is
homogeneous, and if in the decomposition of $(M, F)$ as the
Berwald product of Minkowski space and irreducible symmetric Finsler
spaces, none of whose factors is \hfill\newline
\phantom{XXX} a compact Lie group with a bi-invariant Finsler metric,
\hfill\newline
\phantom{XXX} an odd-dimensional sphere with standard Riemannian metric,
\hfill\newline
\phantom{XXX}
a complex projective of odd dimension $>1$ with standard Riemannian metric,
\hfill\newline
\phantom{XXX} $\mathrm{SU}(2n)/\mathrm{Sp}(n)$, $n\geq 2$ with any
$\mathrm{SU}(2n)$-invariant Finsler metric
\newline\phantom{XXXXX}(there are non-Riemannian ones), nor
\hfill\newline
\phantom{XXX} $\mathrm{SU}(4n+2)/\mathrm{U}(2n+1)$,   $n\geq 1$, with any
$\mathrm{SU}(4n+2)$-invariant Finsler metric
\newline\phantom{XXXXX}(there are non-Riemannian ones),
\hfill\newline
then $(M, F)/\Gamma$ is symmetric.
\end{thm}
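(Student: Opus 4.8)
The plan is to prove the two implications of the first assertion separately and to use, for both the hard implication and the last assertion, a classification of the Clifford translations of symmetric Finsler spaces that I would develop in parallel. Throughout I would rely on three standing facts: (i) the full isometry group $I(M,F)$ of a connected Finsler space is a Lie group acting properly on $M$; (ii) a connected simply connected symmetric Finsler space is a Berwald space whose underlying canonical connection is that of a Riemannian symmetric space, so that in particular every $F$-isometry is an affine transformation, hence an isometry of the background Riemannian symmetric metric; and (iii) one has a de Rham–type decomposition $(M,F)=V\times M_1\times\cdots\times M_r$ as a Berwald product of a Minkowski space $V$ and irreducible non-Euclidean symmetric Finsler factors $M_i=G_i/K_i$ with $G_i\subseteq I^0(M_i)$, and the product distance has the form $d=\widehat F(d_0,d_1,\dots,d_r)$ for a norm $\widehat F$ on $\mathbb{R}^{r+1}$ strictly increasing in each coordinate on the positive orthant.

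For the implication ``$(M,F)/\Gamma$ homogeneous $\Rightarrow$ $\Gamma$ consists of Clifford translations'' I would argue as follows. Since $M$ is simply connected, every isometry of $M/\Gamma$ lifts to an isometry of $M$ that normalizes $\Gamma$, and conversely; hence $I(M/\Gamma)\cong N/\Gamma$ with $N=N_{I(M,F)}(\Gamma)$, and homogeneity of $M/\Gamma$ forces $N$ to act transitively on $M$. Because $\Gamma$ is discrete and normal in the Lie group $N$, the identity component $N^0$ centralizes $\Gamma$; and because $N^0$ has only countably many cosets in $N$ while $M$ is a connected manifold, a dimension/Baire-category count shows the $N^0$-orbits cannot all have positive codimension, so $N^0$ is already transitive on $M$. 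Then for $\gamma\in\Gamma$ and $n\in N^0$ one has $d(nx,\gamma nx)=d(nx,n\gamma x)=d(x,\gamma x)$, so the displacement function $x\mapsto d(x,\gamma x)$ is $N^0$-invariant, hence constant; thus $\gamma$ is a Clifford translation.

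For the converse, ``$\Gamma$ consists of Clifford translations $\Rightarrow$ $(M,F)/\Gamma$ homogeneous'', I would first show that a Clifford translation respects the de Rham decomposition (if it permuted two isometric factors nontrivially, restricting the displacement function to one of those factor-directions would make it non-constant, using that $\widehat F$ is strictly monotone and that $d_i(\cdot,p)$ is non-constant on a non-Euclidean factor). Hence each $\gamma\in\Gamma$ equals $\gamma_0\times\cdots\times\gamma_r$ with each $\gamma_i$ a Clifford translation of $M_i$, and $\gamma_0$ is a translation of $V$ since translations are the only Clifford translations of a Minkowski space. The problem thus reduces to producing, for each factor, a connected subgroup $B_i\subseteq I(M_i,F_i)$ that is transitive on $M_i$ and centralizes the image $\Gamma_i$ of $\Gamma$: then $B_0\times\cdots\times B_r$ is transitive on $M$ and commutes with $\prod_i\Gamma_i\supseteq\Gamma$, so it descends to a transitive group on $M/\Gamma$. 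On $V$ we take the translation group; on a factor with $\Gamma_i$ trivial we take $G_i$. For the remaining factors we need the classification: an irreducible non-Euclidean symmetric Finsler space carries a nontrivial Clifford translation only if it is one of the five types listed in the theorem, and in each of those cases the Clifford translations are contained in a subgroup of $G_i$ whose centralizer in $G_i$ is already transitive --- an abelian circle subgroup (the Hopf circle on an odd sphere or on an odd complex projective space, the analogous circle for $\mathrm{SU}(2n)/\mathrm{Sp}(n)$ and $\mathrm{SU}(4n+2)/\mathrm{U}(2n+1)$) centralized by $\mathrm{U}$ or $\mathrm{Sp}$, or, for a compact group manifold $U$ with a bi-invariant Finsler metric, the group $L_U$ of left translations (all of which are Clifford, since $\mathrm{Ad}(U)$-invariance of the defining norm makes $d(e,\cdot)$ a class function), centralized by the transitive group $R_U$ of right translations. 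Establishing this classification is the \emph{core of the argument and the main obstacle}: over Riemannian symmetric spaces it is Wolf's theorem, but here one must also, using fact (ii) to transfer to the background Riemannian symmetric space, (a) rule out any new Clifford translation on a non-listed irreducible factor equipped with an arbitrary (possibly non-Riemannian) $G_i$-invariant Finsler metric, and (b) check that on the listed factors the Clifford translations and the transitivity of their centralizer persist for all such invariant metrics.

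Finally, for the last assertion: if none of the listed factors occurs, the classification gives $\gamma_i=\mathrm{id}$ for all $i\geq1$, so $\Gamma$ acts only on the Minkowski factor $V$, and there by translations; hence $M/\Gamma\cong (V/\Gamma)\times M_1\times\cdots\times M_r$. Each $M_i$ is symmetric by hypothesis, and $V/\Gamma$ is symmetric because the geodesic symmetry $x\mapsto-x$ of the Minkowski space $V$ commutes modulo $\Gamma$ with every translation in $\Gamma$ and so descends to $V/\Gamma$, giving a symmetry at the image of the origin, whence (by homogeneity of $V/\Gamma$) a symmetry at every point. A finite Berwald product of symmetric Finsler spaces is again symmetric, so $(M,F)/\Gamma$ is symmetric, completing the proof.
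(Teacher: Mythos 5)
Your overall architecture matches the paper's, and you correctly locate the hard point --- establishing, for an irreducible compact factor with a possibly non-Riemannian invariant metric, that the Clifford translations and the transitivity of their centralizer behave as in the Riemannian case --- but you leave that point unresolved. The paper closes this gap with a short chain of four statements (Corollary \ref{inj-isom}, Theorem \ref{equal-isom}, Corollary \ref{eq-cliff}, Lemma \ref{clif-clif}): by Szab\'o's result the Chern connection is the Levi-Civit\`a connection of a Riemannian symmetric metric $Q$, so $\mathfrak I(M,F)\subset\mathfrak I(M,Q)$ with the same identity component; and one then shows, via an Ozols-style group-theoretic computation with transvections, that every Clifford translation of $(M,F)$ is also a Clifford translation of $(M,Q)$ and that transitivity of a centralizer transfers both ways between $\mathfrak I(M,F)$ and $\mathfrak I(M,Q)$. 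Once this bridge is built, the entire irreducible-compact classification reduces to Wolf's Riemannian Theorems 6.1 and 6.2 in \cite{WO62}, and there is no need to re-examine each of the five listed spaces with its non-Riemannian metrics by hand. Your items ``(a) rule out any new Clifford translation\ldots (b) check\ldots for all such invariant metrics'' are precisely the content of that bridge, but in your proposal they remain a to-do list rather than an argument.

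A second concrete problem is your standing fact (iii), that the product distance has the form $d=\widehat F(d_0,\dots,d_r)$ for a norm $\widehat F$ strictly increasing in each coordinate. This is not true for a general Berwald product: already on a Minkowski space $\mathbb{R}^2$ with a norm lacking axis symmetries, $F(c-a,\,d-b)$ depends on the signs and not only on $|c-a|,|d-b|$, and Finsler distances are in general asymmetric. The paper avoids this by introducing \emph{orthogonal products} (Definition \ref{defn3.1}) and proving only the inequalities of Lemmas \ref{lem3.1} and \ref{lem3.2}, which --- combined with the cut-locus result Lemma \ref{cutloci} --- are what actually force Clifford translations to split factor-wise (Theorem \ref{thm3.2}) and, in the compact case, to act trivially on any permutation of isometric factors (Theorem 4.7, where the antipodal map is pinned down and a geodesic is exhibited along which the displacement would vary). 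Your one-sentence dismissal of nontrivial permutations rests on the unjustified monotone-norm formula and would not survive without the paper's more careful argument. Your treatment of the easy direction (homogeneous implies Clifford, via the normalizer $N$ and the transitivity of $N^0$) and of the final assertion is correct, though the paper simply invokes \cite[Theorem 6.2]{WO62} for the latter after the reduction is in place.
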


The Riemannian case of Theorem \ref{maintheorem} was formulated and
proved in the second author's papers \cite{WO60} for constant sectional
curvature and then \cite{WO62} in general.
\medskip

Recall that an isometry of a Riemannian manifold is called a Clifford translation if it moves each point of $M$ the same distance. This definition can be
generalized in an obvious way to the Finslerian case. Recently, the first author and M. Xu initiated the study of Clifford translations of Finsler spaces in a
series papers (\cite{DP, DM1, DM2, DM3}). They showed that many important results in the Riemannian case can be generalized to Finsler spaces. Moreover,
 some new phenomena were found in the Finslerian case. For example, there are some non-Riemannian Finsler spaces which are Clifford homogeneous, in the sense
that for any two points
$x_1, x_2$ in the manifold, there is a Clifford translation which maps $x_1$ to $x_2$. In view of that result it would be an interesting problem to classify all
the Clifford homogeneous Finsler spaces. We note that that the Riemannian Clifford homogeneous manifolds were classified by  Berestovskii and  Nikonorov
in \cite{BN081, BN082, BN09}.
Their list consists of the euclidean spaces, the odd-dimensional spheres with constant curvature, the connected simply connected compact simple Lie groups
with bi-invariant Riemannian metrics and the direct products of the above
manifolds.
\medskip

In Section 2, we present preliminaries of Finsler geometry. There we define
the notion of orthogonal  product of Finsler spaces and deduce a result on
the distance function of orthogonal product. It would be an interesting problem
to consider whether there is an analogue of the de Rham decomposition of
Finsler spaces, as in Riemannian geometry.
\medskip

The main results of this paper are obtained
through the study of Clifford translations of globally symmetric Finsler spaces.
In Section 3, the study is reduced to the case where $(M,F)$ is a Minkowski
space, or is of noncompact type, or is of compact type.  Clifford
translations of Minkowski spaces and symmetric Finsler spaces of
noncompact type are also treated in Section 3.  That leaves the case where
$(M,F)$ is of compact type.
\medskip

In Section 4 we reduce the proof for compact type to the case where $(M,F)$
is irreducible and of compact type.  This is one of the most delicate parts
of the proof.  In Section 5, the irreducibility allows
us to combine results of \cite{WO62} and \cite{OZ74} with a reduction to the
Riemannian case, completing the proof of Theorem \ref{maintheorem}.
\medskip

Although the main results of this paper are similar
to the Riemannian case, the arguments make use of some new ideas.
This generalization is important in Finsler geometry, where it is an
instance of the principle that the generalization of any important result
from Riemannian geometry to Finsler geometry may require a new viewpoint.

\section{Preliminaries}
\setcounter{equation}{0}
In this section, we will recall some definitions and notations in Finsler geometry. In particular,
we will survey some results on symmetric Finsler spaces.

\subsection{Finsler spaces}
\begin{defn}
 Let $V$ be a n-dimensional
real vector space. A Minkowski norm on $V$ is a real-valued function $F$ on
$V$  which is smooth on $V\backslash\{0\}$ and satisfies the conditions
\begin{description}
\item{\rm (1)}\quad $F(u)\geq 0$, $\forall u\in V$;
\item{\rm (2)}\quad $F(\lambda u)=\lambda F(u)$, $\forall\lambda >0$; and
\item{\rm (3)}\quad Given a basis $u_1, u_2, \cdots, u_n$ of $V$, write
$F(y)=F(y^1, y^2, \cdots, y^n)$ for
$y=y^1u_1+y^2u_2+\cdots+y^nu_n$. Then the Hessian matrix
$$(g_{ij}):=\left([\tfrac{1}{2}F^2]_{y^iy^j}\right)$$
is positive-definite at any point of $V\backslash\{0\}$.
\end{description}
The real vector space $V$ with the  Minkowski  norm $F$ is called a Minkowski space, usually denoted as $(V, F)$.
\end{defn}

It can be shown that  for a Minkowski norm $F$, we have
$F(u)>0$, $\forall u\ne 0$. Furthermore, we have  the triangle inequality:
 $$F(u_1+u_2)\leq F(u_1)+F(u_2),$$
 where the equality holds if and only if $u_2=\alpha u_1$ or $u_1=\alpha u_2$ for some $\alpha\geq 0$. From the triangle inequality one can deduce the
 fundamental identity:
 $$F(w)\geq w^iF_{y^i}(y),\,\,\mbox{for any}\,\, y\ne 0,$$
 with equality holding if and only if there is $\alpha\geq 0$ such that $w=\alpha y$; see \cite{BCS00}.

For a Minkowski norm $F$ on the real vector space $V$ we define:
$$C_{ijk}=\tfrac{1}{4}[F^2]_{y^iy^jy^k}.$$
Given $y\ne 0$, we can define two tensors on $V$, namely,
\begin{eqnarray*}
g_y(u, v)&=&\sum_{i,j=n}^ng_{ij}(y)u^iv^j,\\
C_y(u, v, w) &=&\sum_{i,j,k=1}^n C_{ijk}(y)u^iv^jw^k.
\end{eqnarray*}
They are called the fundamental tensor and the Cartan tensor,
respectively. Both the fundamental tensor and the Cartan tensor are symmetric. It is easily seen that
\begin{equation}\label{Cartan}
C_y(y, u, v)=0,\quad \mbox{for }\,\, y, u, v \in V \, \mbox{with } y \ne 0;
\end{equation}
see \cite{BCS00}.

\begin{defn}
 Let $M$ be a (connected)
smooth manifold. A Finsler metric on $M$ is a function $F$: $TM\to
[0,\infty)$ such that
\begin{description}
\item{\rm (1)}\quad $F$ is $C^\infty$ on the slit tangent bundle
$TM\setminus\{0\}$;
\item{\rm (2)}\quad The restriction of $F$ to any $T_xM$, $x\in M$ is a
Minkowski norm.
\end{description}
\end{defn}

Let $(M,F)$ be a Finsler space and $x, y\in M$. For any piecewise smooth
curve $\sigma (t)$, $0\leq~t\leq~1$ connecting $x$ and $y$, we
define the arc length of the curve by
$$L(\sigma)=\int_0^1 F(\sigma (t), \sigma'(t))dt.$$
The distance function $d$ of $(M,F)$ is
defined by
$$d(x,y)=\inf _{\sigma\in \Gamma (x,y)} L(\sigma),$$
where $\Gamma (x,y)$ denotes the set of all piecewise smooth
curves emanating from $x$ to $y$. It can be proved  that
$d(x, y)\geq 0 $ with the equality holding if and only if $ x=y$ and
$d(x,y)\leq d(x,z)+d(z,y)$, $\forall x,y,z\in M$. However,
generically we cannot have $d(x,y)=d(y,x)$. Therefore $d$ is not a
metric space distance in the usual sense.

\subsection{The Chern connection}

Let $(M,F)$ be a Finsler space and $(x^1, x^2, \cdots, x^n)$  a
local coordinate system on an open subset  $U$ of $M$. Then
$\frac{\partial}{\partial x^1},\cdots, \frac{\partial}{\partial
x^n}$ form a basis for the tangent space at any point in $U$.
Therefore we have the coefficients $g_{ij}$ and  $C_{ijk}$. Define
$$C^i_{jk}=g^{is}C_{sjk}.$$
The formal Christoffel symbols of the second kind are
$$\gamma^i_{jk}=g^{is}\frac{1}{2}\left(\frac{\partial g_{sj}}{\partial
x^k}-\frac{\partial g_{jk}}{\partial x^s}+\frac{\partial
g_{ks}}{\partial x^j}\right).$$ They are functions on $TU\setminus\{0\}$.
We can also define some other quantities on $TU\setminus\{0\}$ by
$$N^i_j (x,y):=\gamma^i_{jk}y^k -C^i_{jk}\gamma^k_{rs}y^r y^s,$$
where $y=\sum y^i\frac{\partial}{\partial x^i}\in T_x (M)\setminus\{0\}$.

Now the slit tangent bundle $TM\setminus\{0\}$ is a fiber bundle  over the
manifold $M$ with the natural projection $\pi$. Since $TM$ is a
vector bundle over $M$, we have a pull-back bundle $\pi^* TM$ over
$TM\setminus\{0\}$.

\begin{thm}
 \textbf{\rm (Chern \cite{CH43})}\quad  The pull-back
bundle $\pi^*TM$ admits a unique linear connection,  which is torsion free and almost $g$-compatible.
The coefficients of the connection is
$$\Gamma^l_{jk}=\gamma^l_{jk}-g^{li}\tfrac{1}{F}(A_{ijs}N_k^s-A_{jks}N^s_i
+A_{kis}N^s_j).$$
\end{thm}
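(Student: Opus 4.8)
The plan is to follow Chern's original argument, which is the Finsler counterpart of the proof of the fundamental theorem of Riemannian geometry: translate the two defining properties into linear equations for the connection coefficients and solve them (uniqueness), then substitute the solution back and verify the two properties (existence). Work on the slit tangent bundle $TM\setminus\{0\}$ with the adapted coframe $dx^1,\dots,dx^n,\delta y^1,\dots,\delta y^n$, where $\delta y^i=dy^i+N^i_j\,dx^j$ with the $N^i_j$ as defined above, and let $\tfrac{\partial}{\partial x^1},\dots,\tfrac{\partial}{\partial x^n}$ be the natural local frame of $\pi^*TM$. A linear connection on $\pi^*TM$ is determined by connection $1$-forms $\omega^i_j$ via $\nabla\tfrac{\partial}{\partial x^j}=\omega^i_j\otimes\tfrac{\partial}{\partial x^i}$. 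First I would record what the hypotheses mean. Vanishing of the $dx$-torsion, $dx^j\wedge\omega^i_j=0$, forces (by independence of the $2$-forms $dx^j\wedge\delta y^k$) that $\omega^i_j$ has no $\delta y$-component, say $\omega^i_j=\Gamma^i_{jk}\,dx^k$, and (by Cartan's lemma) that $\Gamma^i_{jk}=\Gamma^i_{kj}$. Almost $g$-compatibility is the identity $dg_{ij}-g_{kj}\,\omega^k_i-g_{ik}\,\omega^k_j=\tfrac{2}{F}A_{ijk}\,\delta y^k$, where $\tfrac1F A_{ijk}=C_{ijk}$; the precise shape of this ``error'' term is in fact dictated by the homogeneity identity $\tfrac{\partial g_{ij}}{\partial y^k}=2C_{ijk}$, since expanding $dg_{ij}$ in the adapted coframe already produces $\tfrac{2}{F}A_{ijk}\,\delta y^k$ as its $\delta y$-part.

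Consequently the $\delta y^k$-components of the compatibility identity hold automatically, and its entire content lies in the $dx^k$-component:
$$\frac{\partial g_{ij}}{\partial x^k}-\frac{2}{F}A_{ijm}N^m_k\;=\;g_{lj}\Gamma^l_{ik}+g_{il}\Gamma^l_{jk}.$$
Write $G_{ij;k}$ for the left-hand side; it is symmetric in $i,j$. Forming the cyclic combination $G_{ij;k}+G_{ki;j}-G_{jk;i}$ and using $\Gamma^l_{jk}=\Gamma^l_{kj}$ together with the symmetry of $g_{ij}$, all terms cancel except $2g_{il}\Gamma^l_{jk}$; since $(g_{ij})$ is positive-definite it can be inverted, giving $\Gamma^l_{jk}=\tfrac12 g^{li}\big(G_{ij;k}+G_{ki;j}-G_{jk;i}\big)$. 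Separating the three partial-derivative terms $\partial g/\partial x$, which assemble into the formal Christoffel symbols $\gamma^l_{jk}$, from the three terms built from the Cartan tensor and the $N^i_j$ yields exactly $\Gamma^l_{jk}=\gamma^l_{jk}-g^{li}\tfrac1F\big(A_{ijs}N^s_k-A_{jks}N^s_i+A_{kis}N^s_j\big)$. This gives uniqueness and the stated coefficient formula at once.

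For existence one takes this formula as the definition of $\Gamma^l_{jk}$ and sets $\omega^i_j=\Gamma^i_{jk}\,dx^k$. Torsion-freeness is immediate: there is no $\delta y$-part, and $\Gamma^l_{jk}$ is symmetric in $j,k$ because $\gamma^l_{jk}$ is and, $A_{ijk}$ being totally symmetric, the bracket $A_{ijs}N^s_k-A_{jks}N^s_i+A_{kis}N^s_j$ is unchanged under $j\leftrightarrow k$. For almost $g$-compatibility one computes $dg_{ij}-g_{kj}\omega^k_i-g_{ik}\omega^k_j$ in the adapted coframe: its $\delta y$-part is $\tfrac{2}{F}A_{ijk}\delta y^k$, as already noted, and its $dx^k$-part vanishes precisely because $\Gamma$ was obtained by solving the horizontal equation $G_{ij;k}=g_{lj}\Gamma^l_{ik}+g_{il}\Gamma^l_{jk}$, so one simply runs the cyclic manipulation backwards. (The identity $C_y(y,\cdot,\cdot)=0$ of \eqref{Cartan} is not strictly needed here, but it is what makes the attendant properties of the $N^i_j$ — such as $N^i_j y^j=\gamma^i_{jk}y^jy^k$ — fall out.)

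I expect the only real obstacle to be the bookkeeping: keeping the horizontal ($dx$) and vertical ($\delta y$) parts cleanly separated at every step, and checking that the Cartan-tensor correction built into the definition of $N^i_j$ is exactly what makes the existence half close with the prescribed ``error'' term and nothing left over. The uniqueness half is a purely algebraic Koszul-type computation and presents no difficulty once the structure equations are in hand.
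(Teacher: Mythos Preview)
The paper does not prove this theorem at all: it is stated with attribution to Chern \cite{CH43}, and the text immediately following simply remarks that the resulting connection is called the Chern connection and refers to \cite{BCS00} for details. So there is no ``paper's own proof'' to compare against. Your outline is the standard proof (essentially the one in \cite{BCS00}, Chapter~2): recast torsion-freeness and almost $g$-compatibility as structure equations in the adapted coframe $\{dx^i,\delta y^i\}$, read off that $\omega^i_j$ is purely horizontal with symmetric coefficients, isolate the $dx^k$-component of the compatibility condition, and solve by the Koszul cyclic trick to get the stated formula; then verify. That argument is correct and is exactly what the paper is implicitly deferring to.
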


In the literature, the above connection is called the Chern connection. Using the Chern connection, we can define the notions of geodesics, exponential map for Finsler spaces as in Riemannian geometry;
see \cite{BCS00} for the details.  Note that the Chern connection is invariant
under isometries of $(M,F)$.

\begin{defn}
A Finsler space $(M, F)$ is called a Berwald space if in a local standard coordinate system the coefficients $\Gamma^{jk}_i$ are functions of $x\in M$ only.
In this case, the coefficients $\Gamma$ define an affine  connection on the underlying  manifold $M$.
\end{defn}

\begin{rem}\label{szabo}{\rm
It was proved by Z. I. Szab\'{o} (\cite{SZ81})
that, if $(M, F)$ is a Berwald space, then there exists a Riemannian metric $Q$ on $M$ whose Levi-Civit\`a connection coincides
with the linear connection of $(M, F)$.  Then of course $(M,Q)$ is unique up to
affine diffeomorphism.}
\end{rem}

\subsection{Symmetric Finsler spaces}
\begin{defn}
A  Finsler space $(M, F)$ is called locally symmetric if for any $x\in M$, there exists a neighborhood $U$ of $x$ such that
the geodesic symmetry on $U$ is a local isometry. It is called globally symmetric if  for any $x\in M$, there is an involutive isometry $\rho$ with $x$
as an isolated fixed point.
\end{defn}

It is the main result of \cite{DH07} that any globally symmetric Finsler space (or a complete locally symmetric Finsler space) must be of the Berwald type.
Therefore we usually consider the more generalized class of globally or locally affine symmetric Berwald spaces. Recall that a Berwald space $(M, F)$ is
called globally (resp. locally) affine symmetric if its connection is globally (resp. locally) affine symmetric. It is easily seen that a reversible globally
affine symmetric Berwald space  must be globally symmetric. However, there are many symmetric Berwald spaces that are not reversible.

To study affine symmetric Berwald spaces, we introduced the notion of a
Min-kowski Lie algebra in \cite{DH07}. From now on, we shall simplify the
term ``globally (resp. locally) affine symmetric spaces'' as ``GASBS
(resp. LASBS)''.

\begin{defn}
Let $ ({\mathfrak g},\sigma)$
be a symmetric Lie algebra and let ${\mathfrak g}={\mathfrak h}+{\mathfrak m}$
denote
the canonical decomposition of ${\mathfrak g}$ with respect to the
involution $\sigma$. Let $F$ be a Minkowski norm on ${\mathfrak m}$
such that
$$g_y([x,u],v)+g_y(u,[x,v])+2C_y([x,y],u,v)=0,$$
for all $y,u,v\in {\mathfrak m}$ and $x\in {\mathfrak h}$ with $y \ne 0$,
where $g_y$ and $C_y$
are the fundamental form and Cartan tensor of $F$ at $y$, respectively.
Then $({\mathfrak g}, \sigma, F)$ is called a Minkowski symmetric Lie algebra.
\end{defn}

There exists a correspondence between affine symmetric Berwald spaces and
Minkowski symmetric Lie algebras, similar to the orthogonal
involutive Lie algebra correspondence of the
Riemannian case. Each Minkowski Lie algebra must be an orthogonal symmetric Lie algebra with  respect to some inner product. Hence any Minkowski symmetric
Lie algebra can be decomposed into the direct sum of  an abelian ideal, a Minkowski symmetric Lie algebra of compact type and a Minkowski symmetric Lie
algebra of noncompact type. Furthermore, a Minkowski symmetric Lie algebra of compact or noncompact type can be decomposed into the direct sum of
irreducible ones (see \cite{DH07}).  From this we deduce the following.

\begin{thm}\label{berwaldproduct}
Let $(M, F)$ be a connected simply
connected GASBS. Then $(M, F)$ can be
 decomposed into the product of a  Minkowski
space, a GASBS of compact type and a
GASBS of noncompact type. Moreover,
every simply connected GASBS of compact or
noncompact type can be  decomposed  into the product of
irreducible GASBS's. The decomposition is
unique as manifolds but in general is not unique as Finsler spaces.
\end{thm}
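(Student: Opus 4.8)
The plan is to deduce everything from the algebraic decomposition theory of Minkowski symmetric Lie algebras, which is summarized in the excerpt just before the statement and established in \cite{DH07}. First I would recall that a connected simply connected GASBS $(M,F)$ is, by the correspondence between affine symmetric Berwald spaces and Minkowski symmetric Lie algebras, determined by a Minkowski symmetric Lie algebra $(\mathfrak g,\sigma,F)$ together with the choice of the simply connected group realizing it; here $M = G/H$ with $G$ simply connected, $H$ connected, $\mathfrak g = \mathfrak h + \mathfrak m$ the canonical decomposition, and $F$ the Minkowski norm on $\mathfrak m \cong T_o M$ extended by $G$-invariance. The key algebraic input, quoted in the excerpt, is that every Minkowski symmetric Lie algebra splits as an orthogonal (with respect to a suitable inner product making it an orthogonal symmetric Lie algebra) direct sum
$$(\mathfrak g,\sigma,F) = (\mathfrak g_0,\sigma_0,F_0)\oplus(\mathfrak g_c,\sigma_c,F_c)\oplus(\mathfrak g_{nc},\sigma_{nc},F_{nc}),$$
where $\mathfrak g_0$ is abelian (the Euclidean factor), $\mathfrak g_c$ is of compact type and $\mathfrak g_{nc}$ of noncompact type, and moreover $\mathfrak g_c$ and $\mathfrak g_{nc}$ further decompose into orthogonal direct sums of irreducible Minkowski symmetric Lie algebras.

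The second step is to translate this Lie-algebra direct sum into a geometric product of Finsler spaces. I would use the notion of orthogonal (Berwald) product of Finsler spaces defined in Section 2: if $\mathfrak m = \mathfrak m_0 \oplus \mathfrak m_c \oplus \mathfrak m_{nc}$ is the induced decomposition of $\mathfrak m$, one checks that the defining compatibility condition of a Minkowski symmetric Lie algebra forces $F$ to respect this splitting, i.e. $F$ is the orthogonal product of its restrictions $F_0, F_c, F_{nc}$ to the summands (the cross Cartan-tensor terms vanish because $[\mathfrak h_i,\mathfrak m_j]=0$ for $i\neq j$). Passing to the simply connected groups, $G = G_0 \times G_c \times G_{nc}$ and $H = H_0 \times H_c \times H_{nc}$, so $M = M_0 \times M_c \times M_{nc}$ as manifolds, and the $G$-invariant metric $F$ is exactly the orthogonal product metric; hence $(M,F)$ is the Berwald product of a Minkowski space $(M_0,F_0) = (\mathfrak m_0, F_0)$, a GASBS of compact type, and a GASBS of noncompact type. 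Iterating the same argument with the decompositions of $\mathfrak g_c$ and $\mathfrak g_{nc}$ into irreducibles gives the further factorization into irreducible GASBS's; simple connectivity of the factors is inherited because each $G_i$ is simply connected and each $H_i$ connected.

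The last point is the uniqueness claim. For uniqueness as manifolds I would invoke the de Rham-type decomposition of the underlying simply connected symmetric affine connection (equivalently, apply Szab\'o's theorem, Remark \ref{szabo}, to pass to an associated Riemannian symmetric space and use the classical de Rham decomposition), which is unique up to order and affine isomorphism; the Euclidean, compact, and noncompact pieces are canonically separated by the holonomy/curvature. The failure of uniqueness as Finsler spaces I would illustrate by noting that on a fixed product manifold $M_1 \times M_2$ one may form different orthogonal product norms (e.g. different ``$\ell^p$-type'' combinations of $F_1$ and $F_2$), all $G$-invariant and all yielding GASBS structures on the same manifold but non-isometric as Finsler spaces. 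The main obstacle in writing this carefully is the second step: verifying that the Minkowski-symmetric-Lie-algebra compatibility condition, together with the vanishing of mixed brackets, really forces $F$ to be an \emph{orthogonal} product (and not merely invariant under each factor group), which is where the Cartan tensor identity \eqref{Cartan} and the precise definition of orthogonal product from Section 2 must be used; everything else is bookkeeping on top of the algebraic decomposition from \cite{DH07}.
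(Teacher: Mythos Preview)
Your plan is correct and matches the paper's approach: the paper does not give a standalone proof of Theorem~\ref{berwaldproduct} but simply records it as a consequence of the decomposition theory of Minkowski symmetric Lie algebras developed in \cite{DH07}, exactly as you outline in your first step.

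One remark on scope: you spend most of your effort on the ``main obstacle'' of showing that $F$ is an \emph{orthogonal} product in the sense of Definition~\ref{defn3.1}. That is not part of Theorem~\ref{berwaldproduct}; the theorem only asserts a Berwald product (product of manifolds, product linear connection, compatible restrictions $F_i$), and the paper defers the verification of orthogonality to Section~\ref{sec4}, where it is proved for the compact-type irreducible factors using the identity $[\mathfrak k_j,\mathfrak p_j]=\mathfrak p_j$. So your second step is doing more than is required here, and the argument you sketch there (vanishing of mixed brackets alone) is in fact not quite enough---irreducibility of the factors is what the paper uses to conclude $g_y(y,v)=0$ across factors. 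For the present theorem you only need the Lie-algebra splitting from \cite{DH07} plus the passage to simply connected groups, which you describe correctly.
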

 In the following, we usually say that $(M, F)$ is the Berwald product of
a Minkowski space and the irreducible GASBS's.

 \section{Product decompositions of Clifford translations}\label{sec3}
\setcounter{equation}{0}

In this section and the next, we see how product decompositions of Theorem
\ref{berwaldproduct} leads to a corresponding product decomposition
of Clifford translations.  This is much more
delicate than the comparable decompositions in the Riemannian case.
Here in Section \ref{sec3} we develop the basics of these decompositions and
their specialization to Minkowski spaces and Finsler symmetric spaces
of noncompact type.  In Section \ref{sec4} we take a close look at the
situation for Finsler symmetric spaces of compact type.  Then in Section
\ref{sec5} we will use these product decompositions to complete the proof of
Theorem \ref{maintheorem}.
\medskip

We first adjust some definitions from \cite{OZ74} to fit the Finsler framework.

\begin{defn} \label{def-pres}
Let $\sigma$ be a map from a smooth manifold $M$ into itself and $\gamma: {\mathbb R}\to M$ a curve in $M$. We say that $\sigma$ preserves $\gamma$ if there
is a constant $c\in {\mathbb R}$ such that $\sigma (\gamma (t))=\gamma (t+c)$, for all $t\in {\mathbb R}$.
\end{defn}

The following lemma extends a result on Riemannian manifolds;  see \cite{OZ74}.
\begin{lem}
Let $(M, F)$ be a Finsler space, $\sigma$ an isometry of $(M, F)$,
$\gamma: {\mathbb R}\to M$ a geodesic of constant speed, and
$c\in {\mathbb R}$\, a constant. The following are equivalent:
\begin{description}
\item{\rm (1)}\quad $\sigma(\gamma (t))=\gamma (t+c)$ for all $t$,
in other words $\sigma$ preserves $\gamma$;
\item{\rm (2)}\quad $\sigma_*(\dot{\gamma}(t))=\dot{\gamma}(t+c)$ for all $t$;
\item{\rm (3)}\quad $\sigma_*(\dot{\gamma}(t)^\perp)=\dot{\gamma}(t+a)^\perp$
and $g_T(\sigma_*(\dot{\gamma}(t)), \dot{\gamma}(t+c))\geq 0$,
where $g_T$ is the fundamental tensor,   $T$ is the tangent vector field of
$\gamma$, and $\perp$ is taken with respect to $g_T$.
\end{description}
\end{lem}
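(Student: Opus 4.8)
The plan is to prove the three-way equivalence by a cyclic argument, $(1)\Rightarrow(2)\Rightarrow(3)\Rightarrow(1)$, exploiting the fact that $\sigma$ is an isometry and therefore commutes with the exponential map and preserves the Chern connection (hence maps geodesics to geodesics of the same speed), together with the fundamental-identity/Cauchy–Schwarz-type rigidity of the Minkowski norm.

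First I would establish $(1)\Rightarrow(2)$: if $\sigma(\gamma(t))=\gamma(t+c)$ identically in $t$, differentiate with respect to $t$. Since $\sigma$ is smooth, the chain rule gives $\sigma_*(\dot\gamma(t))=\frac{d}{dt}\gamma(t+c)=\dot\gamma(t+c)$. This direction is essentially immediate. For $(2)\Rightarrow(1)$ (which I will fold into $(3)\Rightarrow(1)$), note that $\sigma\circ\gamma$ and $t\mapsto\gamma(t+c)$ are both geodesics of $(M,F)$ — the former because $\sigma$ is an isometry and thus an affine map for the Chern connection, the latter trivially — and they have the same initial point and initial velocity at, say, $t=0$ (namely $\sigma(\gamma(0))=\gamma(c)$ and $\sigma_*(\dot\gamma(0))=\dot\gamma(c)$); by uniqueness of geodesics with prescribed initial data they coincide for all $t$.

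Next, $(2)\Rightarrow(3)$: suppose $\sigma_*(\dot\gamma(t))=\dot\gamma(t+c)$ for all $t$. Because $\sigma$ is an isometry, it preserves the fundamental tensor in the sense that $g_{\sigma_*y}(\sigma_*u,\sigma_*v)=g_y(u,v)$; applying this with $y=\dot\gamma(t)=T_{\gamma(t)}$ shows that $\sigma_*$ carries the $g_T$-orthogonal complement of $\dot\gamma(t)$ isometrically onto the $g_T$-orthogonal complement of $\sigma_*\dot\gamma(t)=\dot\gamma(t+c)$, i.e. $\sigma_*(\dot\gamma(t)^\perp)=\dot\gamma(t+c)^\perp$ (here one uses $c$ in place of the $a$ that appears in the statement — I would remark that $a=c$). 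Moreover $g_T(\sigma_*\dot\gamma(t),\dot\gamma(t+c))=g_{\dot\gamma(t+c)}(\dot\gamma(t+c),\dot\gamma(t+c))=F(\dot\gamma(t+c))^2>0$, so in particular it is $\geq 0$.

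The main obstacle is the reverse implication $(3)\Rightarrow(2)$, since here the hypothesis only controls $\sigma_*\dot\gamma(t)$ modulo the line $\mathbb R\,\dot\gamma(t+c)$ plus a sign condition, and one must upgrade this to an exact equality. The idea is: write $\sigma_*(\dot\gamma(t)) = \lambda(t)\dot\gamma(t+c) + w(t)$ with $w(t)\in \dot\gamma(t+c)^\perp$; the condition $\sigma_*(\dot\gamma(t)^\perp)=\dot\gamma(t+c)^\perp$ together with the fact that $\sigma_*$ is a linear isomorphism of tangent spaces forces $w(t)=0$ (any nonzero component in $\dot\gamma(t+c)^\perp$ would, combined with the image of $\dot\gamma(t)^\perp$, overdetermine the dimension — more carefully, $\sigma_*$ maps the hyperplane $\dot\gamma(t)^\perp$ onto the hyperplane $\dot\gamma(t+c)^\perp$, so it maps any complementary line to a complementary line, hence $\lambda(t)\neq 0$, but we still need $w(t)=0$). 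The cleanest route is to use that $\sigma$ is an isometry so $F(\sigma_*\dot\gamma(t))=F(\dot\gamma(t))$, which by constant speed equals $F(\dot\gamma(t+c))$; then apply the fundamental identity $F(\sigma_*\dot\gamma(t))\ge (\sigma_*\dot\gamma(t))^i F_{y^i}(\dot\gamma(t+c))$ with equality iff $\sigma_*\dot\gamma(t)$ is a nonnegative multiple of $\dot\gamma(t+c)$. The quantity $(\sigma_*\dot\gamma(t))^i F_{y^i}(\dot\gamma(t+c))$ can be rewritten via Euler's relation as $g_{\dot\gamma(t+c)}(\sigma_*\dot\gamma(t),\dot\gamma(t+c))/F(\dot\gamma(t+c))$, which by the sign hypothesis in (3) is $\ge 0$ and, using orthogonality of $w(t)$ and $F(\sigma_*\dot\gamma(t))=F(\dot\gamma(t+c))$, forces $\lambda(t)=1$ and $w(t)=0$; thus $\sigma_*\dot\gamma(t)=\dot\gamma(t+c)$, giving (2) and closing the cycle. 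I would carry out this last computation carefully, as it is where the Finslerian (as opposed to Riemannian) features — the lack of a genuine inner product and the role of the fundamental identity — actually enter.
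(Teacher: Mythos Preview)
The paper itself gives essentially no proof: it only remarks that a Finsler isometry sends geodesics to geodesics and refers implicitly to Ozols' Riemannian argument. So your write-up is far more detailed than what the paper offers, and your treatment of $(1)\Leftrightarrow(2)$ and $(2)\Rightarrow(3)$ is correct and matches the intended argument.

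There is, however, a real gap in your sketch of $(3)\Rightarrow(2)$. After writing $\sigma_*\dot\gamma(t)=\lambda\,\dot\gamma(t+c)+w$ with $w\in\dot\gamma(t+c)^{\perp}$, your ``cleanest route'' uses only (i) $F(\sigma_*\dot\gamma(t))=F(\dot\gamma(t+c))$, (ii) the fundamental inequality $F(u)\ge u^iF_{y^i}(y)$, and (iii) the sign hypothesis $g_{T}(\sigma_*\dot\gamma(t),\dot\gamma(t+c))\ge0$. These three facts yield only $0\le\lambda\le1$; they do \emph{not} force $\lambda=1$ and $w=0$. (Already in the Euclidean case any unit vector in the closed half–space $\{u:\langle u,v'\rangle\ge0\}$ satisfies (i)–(iii), yet need not equal $v'$.) In other words, your argument never uses the first half of hypothesis (3), the equality $\sigma_*(\dot\gamma(t)^{\perp})=\dot\gamma(t+c)^{\perp}$, in any effective way; you note it in a parenthetical and then abandon it.

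The fix is to bring that hypothesis back in via the Finslerian replacement for ``an isometry preserves orthogonal complements''. Since $\sigma$ is an $F$--isometry, $\sigma_*$ preserves the fundamental tensor, i.e.\ $g_{\sigma_*y}(\sigma_*u,\sigma_*v)=g_y(u,v)$; hence $\sigma_*\bigl(\dot\gamma(t)^{\perp}\bigr)=\bigl(\sigma_*\dot\gamma(t)\bigr)^{\perp'}$, where $\perp'$ is taken with respect to $g_{\sigma_*\dot\gamma(t)}$. Combined with the hypothesis this gives $\bigl(\sigma_*\dot\gamma(t)\bigr)^{\perp'}=\dot\gamma(t+c)^{\perp}$ as hyperplanes. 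Equivalently, the linear functionals $g_{\sigma_*\dot\gamma(t)}(\sigma_*\dot\gamma(t),\cdot)$ and $g_{\dot\gamma(t+c)}(\dot\gamma(t+c),\cdot)$ are proportional. Since the Legendre map $y\mapsto g_y(y,\cdot)$ is a diffeomorphism of $T_pM\setminus\{0\}$ onto $T_p^*M\setminus\{0\}$ and is positively $1$--homogeneous, proportionality with a positive factor forces $\sigma_*\dot\gamma(t)\in\mathbb R_{>0}\,\dot\gamma(t+c)$, and equality of $F$--norms then gives $\sigma_*\dot\gamma(t)=\dot\gamma(t+c)$. The sign condition $g_T(\sigma_*\dot\gamma(t),\dot\gamma(t+c))\ge0$ is exactly what rules out a negative proportionality factor. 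With this correction your cycle closes.
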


For the proof, note that a Finsler space isometry sends geodesics to geodesics.

\begin{thm}\label{oz}
Let $\sigma$ be an isometry of a complete Finsler space $(M, F)$. Then
the following are equivalent:
\begin{description}
\item{\rm (1)}\quad $\sigma$ is a Clifford translation of $(M,F)$;
\item{\rm (2)}\quad If $x\in M$ there is a minimal geodesic $\gamma$ from $x$ to $\gamma(x)$ preserved by $\sigma$;
\item{\rm (3)}\quad If $x\in M$ and $\gamma$ is a minimal geodesic
from $x$ to $\sigma (x)$, then $\sigma$ preserves $\gamma$.
\end{description}
\end{thm}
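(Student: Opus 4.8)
The plan is to establish the cycle of implications $(2)\Rightarrow(1)\Rightarrow(3)\Rightarrow(2)$, using the previous lemma on preserved geodesics together with the triangle inequality for the (non-symmetric) Finsler distance $d$. Throughout we fix a constant-speed parametrization of geodesics, so ``minimal geodesic from $p$ to $q$'' is understood to be parametrized on an interval $[0,\ell]$ with $F(\dot\gamma)\equiv 1$ and $\ell=d(p,q)$.

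First I would prove $(2)\Rightarrow(1)$. Suppose $x\in M$ and $\gamma:[0,\ell]\to M$ is a minimal geodesic from $x$ to $\sigma(x)$ that is preserved by $\sigma$, say $\sigma(\gamma(t))=\gamma(t+c)$ for all $t$ (extending $\gamma$ to all of $\mathbb R$ by the geodesic equation); evaluating at $t=0$ forces $c=\ell=d(x,\sigma(x))$. By part (2) of the preceding lemma, $\sigma_*\dot\gamma(t)=\dot\gamma(t+c)$, so $\sigma$ maps the unit-speed geodesic ray $\gamma|_{[0,\infty)}$ isometrically onto the ray $\gamma|_{[c,\infty)}$, shifting the parameter by $c$. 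Now take any other point $y\in M$. Pick a minimal geodesic $\tau$ from $\gamma(T)$ to $y$ for a large parameter value $T$; then on one hand $d(y,\sigma(y))\le d(y,\gamma(T)) + d(\gamma(T),\sigma(\gamma(T))) + d(\sigma(\gamma(T)),\sigma(y))$. Since $\gamma$ restricted to $[0,T+c]$ is still minimal (a subsegment of a minimizer from a point to its $\sigma$-image, using $c=d(x,\sigma x)$ and that $\gamma$ is globally minimizing between $x$ and $\sigma x$; more carefully, $d(\gamma(T),\gamma(T+c))=c$ because $\gamma$ is a minimizing geodesic on the whole range), the middle term equals $c$, while $d(\sigma\gamma(T),\sigma y)=d(\gamma(T),y)$ by isometry. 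Hence $d(y,\sigma y)\le d(y,\gamma(T)) + d(\gamma(T),y) + c$. The issue is that the first two terms need not cancel since $d$ is not symmetric; to handle this I would instead run the argument symmetrically in both orders and, crucially, let $T\to\infty$ along the geodesic and compare displacements of nearby points, or — cleaner — use the standard trick: for arbitrary $x,y$ consider $f(p)=d(p,\sigma(p))$ and show $f$ is constant along $\gamma$, then use completeness and a connectedness/continuity argument. Actually the cleanest route is: $f$ is continuous on $M$; if $f$ attains a minimum at some $x_0$, apply $(2)\Rightarrow$(``$\sigma$ shifts a geodesic ray through $x_0$'') and deduce $f$ is constant on that ray; since through every point there is such a ray, and $M$ is complete and connected, iterate to conclude $f$ is globally constant. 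I expect this continuity-and-covering argument to be the main obstacle, precisely because the non-symmetry of $d$ blocks the one-line Riemannian proof and one must be careful about orientation of geodesics and the direction in which distances are measured.

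Next, $(1)\Rightarrow(3)$. Assume $\sigma$ is a Clifford translation with displacement $c:=d(x,\sigma(x))$, constant over $x\in M$, and let $\gamma:[0,c]\to M$ be any minimal geodesic from $x$ to $\sigma(x)$. I want to show $\sigma$ preserves $\gamma$. Consider the concatenation of $\gamma$ with $\sigma\circ\gamma$: the latter runs from $\sigma(x)=\gamma(c)$ to $\sigma^2(x)$, and has length $d(\sigma x,\sigma^2 x)=d(x,\sigma x)=c$ by isometry and the Clifford property. Thus the concatenated curve from $x$ to $\sigma^2(x)$ has length $2c$. On the other hand $d(x,\sigma^2 x)\le d(x,\sigma x)+d(\sigma x,\sigma^2 x)=2c$, but also we need the reverse: pick the midpoint $m=\gamma(c)=\sigma(x)$; since $d(x,m)=c$ and $d(m,\sigma^2 x)=c$ and... here one needs $d(x,\sigma^2 x)\ge 2c$, which follows because $d(x,\sigma^2 x)\ge$ (distance from $x$ to the ``$\sigma$-orbit sphere'') — more precisely, for any $p$, $d(x,\sigma^2 x)\ge d(x,\sigma x) + d(\sigma x, \sigma^2 x) - (\text{slack})$ is the wrong direction. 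The correct tool: iterate the Clifford property along the orbit $x,\sigma x,\sigma^2 x,\dots$ and use that the broken geodesic through these points, each segment minimal of length $c$, is in fact an unbroken minimal geodesic. This is shown by a standard first-variation / no-corner argument: if the concatenation of two minimal geodesics of the same speed has a corner at $\gamma(c)$, one can shortcut and get $d(x,\sigma^2 x)<2c$; but then the point $y$ on the shortcut near $\gamma(c)$ would satisfy $d(y,\sigma y)<c$ (by applying $\sigma$ to the shortcut picture, translating it by one orbit step), contradicting the Clifford property. Hence no corner: $\gamma\cup(\sigma\gamma)$ is a smooth minimal geodesic, so $\sigma$ sends $\dot\gamma(0)$ to $\dot\gamma(c)$ along this geodesic; extending and invoking part (2)$\Leftrightarrow$(1) of the preceding lemma, $\sigma$ preserves $\gamma$. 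This ``no corner'' step, and turning the strict inequality $d(x,\sigma^2 x)<2c$ into a strict drop in displacement at a nearby point, is the technical heart of $(1)\Rightarrow(3)$.

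Finally, $(3)\Rightarrow(2)$ is immediate: given $x\in M$, completeness of $(M,F)$ guarantees (via the Hopf–Rinow theorem for Finsler spaces) the existence of a minimal geodesic $\gamma$ from $x$ to $\sigma(x)$, and $(3)$ says $\sigma$ preserves it. That closes the cycle $(1)\Leftrightarrow(2)\Leftrightarrow(3)$. In writing this up I would lean on the preceding lemma to translate freely between ``$\sigma$ preserves $\gamma$'', ``$\sigma_*\dot\gamma(t)=\dot\gamma(t+c)$'', and the orthogonal-splitting condition (3) of that lemma, and I would isolate as a sub-lemma the statement that a broken geodesic formed by concatenating two unit-speed minimal geodesics with a common endpoint is unbroken provided no strictly shorter path exists — the Finsler first-variation formula (from \cite{BCS00}) gives this, and it is here that the non-reversibility of $F$ must be watched, since the relevant inequalities involve $d(p,q)$ and not $d(q,p)$.
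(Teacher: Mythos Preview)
The paper does not spell out a proof here; it simply refers to Ozols \cite{OZ74} and indicates that the Riemannian argument goes through once one replaces the inner product by the fundamental tensor $g_T$ along the geodesic and uses the Finsler first variation formula (\cite{BCS00}, p.~123). Your treatment of $(3)\Rightarrow(2)$ via Hopf--Rinow and of $(1)\Rightarrow(3)$ via the no-corner argument is exactly Ozols' route transported to the Finsler setting, and agrees with what the paper intends; the corner-cutting step can be made precise by noting that a genuine corner at $\sigma(x)$ would give $d(\gamma(c-\varepsilon),\sigma\gamma(\varepsilon))<2\varepsilon$ for small $\varepsilon$, whence $d(\gamma(\varepsilon),\sigma\gamma(\varepsilon))\le (c-2\varepsilon)+2\varepsilon-\delta<c$, contradicting the Clifford hypothesis.

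The genuine gap is in $(2)\Rightarrow(1)$. None of your three sketches closes: the triangle-inequality attempt fails, as you observe, because $d$ need not be symmetric; the ``$f$ attains a minimum'' approach presupposes compactness, which is not assumed; and the ``iterate along preserved rays'' idea does not propagate, since at a point $y=\gamma(s)$ on the preserved ray through $x$ the hypothesis (2) may simply hand you back the \emph{same} ray (indeed $\gamma|_{[s,s+c]}$ is itself a preserved minimal geodesic from $y$ to $\sigma(y)$), so you never leave a single line. The argument the paper is pointing at is again first variation, applied now to the displacement function $f(x)=d(x,\sigma(x))$. Given $x$ and a preserved minimal unit-speed geodesic $\gamma:[0,c]\to M$ from $x$ to $\sigma(x)$, vary both endpoints by $(v,\sigma_*v)$ for an arbitrary $v\in T_xM$; the first variation of length is
\[
g_{\dot\gamma(c)}\bigl(\dot\gamma(c),\sigma_*v\bigr)-g_{\dot\gamma(0)}\bigl(\dot\gamma(0),v\bigr),
\]
and since $\sigma$ is an isometry with $\sigma_*\dot\gamma(0)=\dot\gamma(c)$ (part (2) of the preceding lemma), the two boundary terms coincide and cancel. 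Hence $f(\exp_x(sv))\le f(x)+o(s)$ for every direction $v$ at every point $x$; a continuous function on a connected manifold with this property is constant. This is precisely the place where the substitution ``replace the inner product by $g_T$'' carries content, and it is the step your proposal is missing.
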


The proof of this theorem is similar to Ozols' proof for
the Riemannian case in \cite{OZ74}. Here one just need use $g_T$, where
$T$ is the tangent vector
field of the curves,  to  replace the inner product, and use the first variation formula in Finsler geometry (see \cite{BCS00}, page 123).
We omit the details here; see \cite{OZ74}.
\medskip

Now we turn to the product decompositions.
We start by  defining a new notion of product for Finsler spaces.
\begin{defn}\label{defn3.1}
Let $(M, F)$ and $(M_i, F_i)$, $i=1,\dots,s$ be  Finsler spaces. We say that $(M, F)$ is the orthogonal product of $(M_i, F_i)$ if the following conditions
are satisfied:
\begin{description}
\item{\rm (1)}\quad The manifold $M$ is the product of the manifolds $M_i$:
$M=M_1\times M_2\times \cdots\times M_m$\,, and the metrics satisfy
$F_i(y)=F(y)$\, for all $i$ and all $y\in T(M_i)\setminus \{0\}$.
\item{\rm (2)}\quad Let $p_i$ be the projection of $M$ onto $M_i$. Then a smooth curve $\gamma(t)$, $a<t<b$, is a geodesic of $(M, F)$ if and only if the
curve $p_i(\gamma)$ is a geodesic of $(M_i, F_i)$, for all $i$.
\item{\rm (3)}\quad Let $g$ be the fundamental tensor of $M$ and
$x = (x_1, x_2, \cdots,x_s) \in M$.  If $y\in T_{x_i}(M_i)\setminus\{0\}$ and
$v\in T_{x_j}(M_j)$ with $i\ne j$ then $g_y(y, v)=0$.
\end{description}
\end{defn}
Notice that geodesics under our consideration are always of constant speed.
The product of Riemannian manifolds is a typical example of an orthogonal product. We will prove in the following that
 the Berwald product of irreducible globally affine Berwald spaces is orthogonal. Notice that the orthogonal product of the given Finsler spaces is
  generically not unique, as one can see from the Berwald product of GASBS's
(see \cite{DH07}).  We now start to construct these product decompositions.

\begin{lem}\label{lem3.1}
Let $(M, F)$ be the orthogonal product of $(M_i, F_i)$, $i=1,\dots,s$. Then
 $F(v_1+v_2+\cdots v_s)\geq F(v_1)$, for any $v_1\in T(M_1)\setminus \{0\}$ and $v_k\in T(M_j)$, $k=2,\dots,s$. The equality holds if and
 only if $v_k=0$, for $k=2,\dots,s$.
\end{lem}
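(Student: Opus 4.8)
The plan is to reduce the inequality to the fundamental identity recalled in Section~2 --- namely $F(w)\geq w^iF_{y^i}(y)$ for $y\ne 0$, with equality exactly when $w=\alpha y$ for some $\alpha\geq 0$ --- applied at the vector $y=v_1$, and then to use condition~(3) of Definition~\ref{defn3.1} to annihilate the cross terms.

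First I would put the fundamental identity in tensorial form. Since $\tfrac12 F^2$ is homogeneous of degree $2$, Euler's identity gives $g_{ij}(y)y^i=[\tfrac12 F^2]_{y^j}=F(y)F_{y^j}(y)$, so that $w^iF_{y^i}(y)=g_y(y,w)/F(y)$, and likewise $g_y(y,y)=F(y)^2$. Hence the fundamental identity reads
$$F(w)\,F(y)\ \geq\ g_y(y,w)\qquad(y\ne 0),$$
with equality if and only if $w=\alpha y$ for some $\alpha\geq 0$.

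Next, write $x=(x_1,\dots,x_s)$ and view $v:=v_1+v_2+\cdots+v_s$ as an element of $T_xM=\bigoplus_i T_{x_i}(M_i)$, the decomposition furnished by condition~(1) of Definition~\ref{defn3.1}. Applying the displayed inequality with $y=v_1$ (nonzero by hypothesis) and $w=v$ gives
$$F(v)\,F(v_1)\ \geq\ g_{v_1}(v_1,v)\ =\ g_{v_1}(v_1,v_1)+\sum_{k=2}^s g_{v_1}(v_1,v_k).$$
By condition~(3) of Definition~\ref{defn3.1}, each summand $g_{v_1}(v_1,v_k)$ with $k\geq 2$ vanishes, because $v_1\in T_{x_1}(M_1)\setminus\{0\}$ while $v_k\in T_{x_k}(M_k)$ with $k\ne 1$. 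Since $g_{v_1}(v_1,v_1)=F(v_1)^2$ and $F(v_1)>0$, dividing through by $F(v_1)$ yields $F(v)\geq F(v_1)$.

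For the equality statement, if $v_k=0$ for all $k\geq 2$ then $v=v_1$ and equality is immediate. Conversely, if $F(v)=F(v_1)$ the chain of (in)equalities above collapses, so $g_{v_1}(v_1,v)=F(v)F(v_1)$, which is precisely the equality case of the fundamental identity; hence $v=\alpha v_1$ for some $\alpha\geq 0$. Comparing components in $T_xM=\bigoplus_i T_{x_i}(M_i)$ forces $v_k=0$ for $k\geq 2$ and $\alpha v_1=v_1$, so $\alpha=1$ since $v_1\ne 0$. The computation is short and I do not expect a genuine obstacle; the only points needing care are that condition~(3) is invoked with the distinguished vector lying in the first factor (exactly our hypothesis) and the bookkeeping of the equality case through the direct-sum structure.
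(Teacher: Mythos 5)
Your proof is correct and follows essentially the same route as the paper: both arguments invoke the fundamental Finslerian inequality $F(w)\geq w^i F_{y^i}(y)$ at $y=v_1$, use Euler's theorem to relate $w^iF_{y^i}(v_1)$ to $g_{v_1}(v_1,\cdot)$, and then kill the cross terms via condition (3) of Definition \ref{defn3.1}. Your packaging of the Euler computation into the single tensorial inequality $F(w)F(y)\geq g_y(y,w)$ up front is a slightly cleaner presentation of the same manipulation the paper carries out in coordinates, and the equality case is handled identically.
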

\begin{proof}
Fix $x=(x_1,\dots,x_s)\in M$.  Choose a local coordinate system
\begin{equation}\label{coordinate}
(x_1^1,\dots, x_1^{n_1},x_2^1,\dots, x_2^{n_2},\dots, x_s^{1},\dots, x_s^{n_s})
\end{equation}
 of $M$ around $x$ such that
$(x_i^1,\dots, x_i^{n_i})$
 is a local coordinate system of $M_i$ on a neighborhood of $x_i$. For the convenience of the following we relabel the coordinate system (\ref{coordinate})
 as
 $$(z^1, z^2,\cdots, z^{n_1+n_2+\dots n_s}).$$
 Denote
$w=v_1+v_2+\dots + v_s$ and $w_1=v_2+\dots + v_s$. By the fundamental equality
we have $$F(w)\geq w^j[F_{y^j}(y)]|_{y=v^1},$$
where $w^j$ is determined  by 
$$w=\sum_{j=1}^{n_1+\dots+ n_s} w^j\frac{\partial}{\partial z^j},$$
and  the automated summation is taken over the range from $1$ to $n_1+\dots n_s$.
Moreover, the equality holds if and only if there is $\alpha\geq 0$ such that $w=\alpha v_1$, in other words if and only if $v_2=\dots v_s=0$. From this we get that
$$F(w)\geq (v_1)^j[F_{y^j}(y)]|_{y=v^1}+(w_1)^j[F_{y^j}(y)]|_{y=v^1}=F(v_1)+(w_1)^j [F_{y^j}(y)]|_{y=v_1},$$
where we have used the Euler's theorem on homogeneous functions (see \cite{BCS00}). Now using again the Euler's theorem,
we have $F_{y^iy^j}(y)y^i=0$. Thus
\begin{eqnarray*}
g_{v_1}(v_1, w_1)&=&g_{ij}(v_1)(v_1)^i(w_1)^j\\
&=&[F_{y^iy^j}(y)F(y)]|_{y=v_1}(v_1)^i(w_1)^j+[F_{(y^i}F_{y^j}]|_{y=v_1}(v_1)^i(w_1)^j\\
&=& F(v_1)[F_{y^iy^j}]|_{y=v_1}(v_1)^i(w_1)^j+[F_{y^i}(y)]|_{y=v_1}(v_1)^i[F_{y^j}(y)]|_{y=v_1}(w_1)^j\\
&=& F(v_1)(w_1)^j[F_{y^j}(y)]|_{y=v_1}.
\end{eqnarray*}
Since $(M, F)$ is an orthogonal product of  $(M_i, F_i)$ we have
$g_{v_1}(v_1, w_1)=0$. Since $F(v_1)\ne 0$ the assertion follows.
\hfill \end{proof}

\begin{lem}\label{lem3.2}
Let $(M, F)$ be a complete Finsler space which is the orthogonal product
of the Finsler spaces $(M_i, F_i)$, $i=1,\dots,s$.
For any $x_j\in M_j$, and $x_j'\in M_j$, $j=1,2,\dots, s$,  we have
\begin{equation}\label{distanceeq}
d((x_1,x_2,\dots, x_s), (x_1', x_2',\dots, x_s'))\geq d_1(x_1, x_1'),
\end{equation}
where $d, d_1$ are the distance functions of $(M, F)$ and $(M_1, F_1)$, respectively. The equality holds if and only if $x_j'=x_j$ for $j=2,\dots, s$.
\end{lem}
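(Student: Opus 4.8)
The plan is to reduce the distance inequality to the pointwise norm inequality of Lemma \ref{lem3.1} by examining a minimizing curve. Fix $x=(x_1,\dots,x_s)$ and $x'=(x_1',\dots,x_s')$. Since $(M,F)$ is complete, there is a minimizing geodesic $\gamma\colon[0,1]\to M$ from $x$ to $x'$ with $L(\gamma)=d(x,x')$. Write $\gamma(t)=(\gamma_1(t),\dots,\gamma_s(t))$ where $\gamma_i=p_i\circ\gamma$. By condition (2) of Definition \ref{defn3.1}, each $\gamma_i$ is a geodesic of $(M_i,F_i)$ (in particular smooth), and $\gamma_1$ runs from $x_1$ to $x_1'$. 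At each $t$, the velocity decomposes as $\dot\gamma(t)=\dot\gamma_1(t)+\cdots+\dot\gamma_s(t)$ with $\dot\gamma_i(t)\in T_{\gamma_i(t)}(M_i)$. Applying Lemma \ref{lem3.1} with $v_1=\dot\gamma_1(t)$ and $v_k=\dot\gamma_k(t)$ (when $\dot\gamma_1(t)\ne 0$; when $\dot\gamma_1(t)=0$ the inequality $F(\dot\gamma(t))\ge F_1(\dot\gamma_1(t))=0$ is trivial) gives $F(\dot\gamma(t))\ge F_1(\dot\gamma_1(t))$, using that $F$ restricts to $F_i$ on $T(M_i)$ by condition (1). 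Integrating over $[0,1]$ yields
$$d(x,x')=L(\gamma)=\int_0^1 F(\dot\gamma(t))\,dt\ \ge\ \int_0^1 F_1(\dot\gamma_1(t))\,dt = L(\gamma_1)\ \ge\ d_1(x_1,x_1'),$$
which is \eqref{distanceeq}.

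For the equality case, suppose $d(x,x')=d_1(x_1,x_1')$. Then every inequality in the displayed chain is an equality; in particular $\int_0^1\big(F(\dot\gamma(t))-F_1(\dot\gamma_1(t))\big)\,dt=0$, and since the integrand is continuous and nonnegative it vanishes identically: $F(\dot\gamma(t))=F_1(\dot\gamma_1(t))$ for all $t$. By the equality clause of Lemma \ref{lem3.1}, at every $t$ with $\dot\gamma_1(t)\ne 0$ we get $\dot\gamma_k(t)=0$ for $k=2,\dots,s$. It remains to handle the set $Z=\{t:\dot\gamma_1(t)=0\}$; here the argument should use that $\gamma$ is a nonconstant geodesic of constant speed, so $\dot\gamma(t)\ne 0$ for all $t$, hence for $t\in Z$ we have $F(\dot\gamma(t))=F_1(\dot\gamma_1(t))=0$, forcing $\dot\gamma(t)=0$, a contradiction — so in fact $Z=\emptyset$ unless $x_1=x_1'$ and $\gamma_1$ is constant. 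If $x_1\ne x_1'$ then $\dot\gamma_k\equiv 0$ for $k\ge 2$, so $x_k'=x_k$ for $k\ge 2$. If $x_1=x_1'$, one instead uses $d(x,x')=d_1(x_1,x_1')=0$, whence $x'=x$ and again $x_k'=x_k$. Conversely, if $x_k'=x_k$ for $k\ge 2$, choosing $\gamma$ to be a minimizing $M_1$-geodesic in the slice $M_1\times\{x_2\}\times\cdots\times\{x_s\}$ (a geodesic of $M$ by condition (2)) shows $d(x,x')\le d_1(x_1,x_1')$, giving equality.

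The main obstacle is the equality case, specifically controlling the behaviour of $\gamma$ on the set where $\dot\gamma_1$ vanishes: Lemma \ref{lem3.1} only gives information when the first component of the velocity is nonzero, and one must rule out an exchange of "speed" into the other factors along a subinterval. The constant-speed assumption on geodesics (emphasized in the remark after Definition \ref{defn3.1}) together with the fact that a minimizing geodesic between distinct points is nonconstant is exactly what closes this gap; some care is needed to phrase it cleanly, perhaps by first treating $x_1\ne x_1'$ and then disposing of $x_1=x_1'$ separately as above.
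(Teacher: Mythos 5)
Your proof is correct and follows essentially the same strategy as the paper: compare $F$ with $F_1$ pointwise via Lemma~\ref{lem3.1} along a curve joining $x$ to $x'$, then exploit the constant-speed property of geodesics to settle the equality case. The paper phrases the ``only if'' direction contrapositively (if some $x_k \neq x_k'$ then $\dot\zeta_k$ is everywhere nonzero, giving a strict pointwise inequality), whereas you assume equality and deduce vanishing of the other velocity components --- the same idea, with you being slightly more explicit about the degenerate set where $\dot\gamma_1$ vanishes.
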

\begin{proof}
For any curve $\gamma$ in $M$ connecting $x=(x_1,\dots, x_s)$ and $x'=(x_1',\dots,x_s')$,  $p_1(\gamma)$ is a curve in $M_1$ connecting $x_1$ and $x_1'$.
By Lemma \ref{lem3.1}, the arc length of $p(\gamma)$ is less than or equal to $\gamma$. Taking the infimum we prove the inequality (\ref{distanceeq}).
Now we prove the second assertion.
Select a minimal geodesic $\gamma_1(t): a<t<b$ in $M_1$ connecting $x_1$ and $x_1'$. Then by the condition (2) of Definition \ref{defn3.1}, the curve
$t\to (\gamma_1(t), x_2, \dots, x_s)$ is a geodesic of $(M, F)$, which has   the same length of $\gamma$. Therefore we have
$$d((x_1,x_2,\dots, x_s), (x_1', x_2,\cdots, x_s))\leq d_1(x_1, x_1').$$
This proves the ``if'' part of the assertion.
On the other hand, suppose there is one $x_k$ which is not equal to $x_k'$, say $x_2\ne x_2'$.
Let $\zeta(t), a\leq t\leq b$ be a minimal geodesic in $M$ connecting $(x_1,x_2,\dots, x_s)$ and $(x_1', x_2',\cdots, x_s')$. Then $p_1(\zeta)$ is a
geodesic in $M_1$ connecting $x_1$ and $x_1'$ and $p_j(\zeta)$ is a  geodesic in $M_k$ connecting $x_j$ and  $x_j'$. By the assumption,
the tangent vector of $p_2(\zeta)$ is everywhere nonzero. Then by Lemma \ref{lem3.1}, the length of $\zeta$, which is just
$$d((x_1, \dots, x_s), (x_1',\dots, x_s')),$$
  is larger than that of $p_1(\zeta)$. Therefore we have
  $$d((x_1, \dots, x_s), (x_1',\dots, x_s'))> d_1(x_1, x_1').$$
  This completes the proof of the lemma.
\hfill \end{proof}
\medskip

We also need a result on conjugate loci and cut loci of GASBS's. The notions  and the fundamental properties of conjugate points and cut points of
a general Finsler space can be found in \cite{BCS00}. Suppose $(M, F)$ is a Berwald space and $x\in M$. As the Chern connection of $(M, F)$ is an
affine  connection, we can define the notion of conjugate points of $x$ in exactly the same way as in Riemannian case, through the exponential map.
On the other hand, suppose $x\in M$ and $\gamma$ is a geodesic emanating from $x$. A point $p$ along $\gamma$ is called a cut point along $\gamma$
if $\gamma$ minimizes arc length up to $p$ but no further. Accordingly, one can define the notions of the  conjugate loci and cut loci, as well as the first
conjugate locus.

\begin{lem}\label{cutloci}
Let $(M, F)$ be a connected and simply connected GASBS. Then for any $x\in M$, the first conjugate locus of $x$ coincides with the  cut locus of $x$.
\end{lem}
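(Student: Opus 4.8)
The plan is to reduce the assertion to the classical Riemannian theorem that in a connected simply connected Riemannian symmetric space the cut locus of a point coincides with its first conjugate locus (Crittenden; see also the Riemannian case of the present theorem in \cite{WO62}), by passing to the Riemannian metric furnished by Szab\'{o}'s theorem and exploiting that the exponential map, the geodesics and the conjugate points of a Berwald space are determined by its Chern connection alone. First I would recall the facts valid in any complete Finsler space (see \cite{BCS00}): along a unit-speed geodesic $\gamma_v$ from $x=\gamma_v(0)$, the cut point is reached no later than the first conjugate point; a geodesic ceases to minimize strictly beyond its first conjugate point; and if the cut point $\gamma_v(t_0)$ precedes the first conjugate point then there is a second unit-speed minimizing geodesic of length $t_0$ from $x$ to $\gamma_v(t_0)$. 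Granting these, it suffices to prove: \emph{whenever $\gamma_v$ has a finite cut value $t_0$, the vector $t_0v\in T_xM$ is a critical point of $\exp_x$}. Indeed this forces the cut point along every geodesic from $x$ to be exactly its first conjugate point (directions along which $\gamma_v$ minimizes forever carrying no conjugate point either), and the union over all directions is the lemma.

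Next I would install the Riemannian comparison. Being a GASBS, $(M,F)$ is Berwald, so by Remark \ref{szabo} there is a Riemannian metric $Q$ whose Levi-Civit\`a connection is the Chern connection $\nabla$ of $(M,F)$. Since the Finsler exponential map, the Riemannian exponential map of $Q$, and the affine exponential map of $\nabla$ are all defined through $\nabla$, they coincide with a single map $\exp_x\colon T_xM\to M$; in particular the critical set of $\exp_x$, hence the first conjugate locus $\mathrm{Conj}^1(x)$, depends only on $\nabla$. Moreover $(M,Q)$ is a connected simply connected Riemannian globally symmetric space: the geodesic symmetries $s_p$ of $(M,\nabla)$ are global affine diffeomorphisms, $s_p^{*}Q$ is $\nabla$-parallel and agrees with $Q$ at $p$ (as $d(s_p)_p=-\mathrm{id}$), hence $s_p^{*}Q=Q$ on the connected manifold $M$, so every $s_p$ is a $Q$-isometry. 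Applying the classical theorem to $(M,Q)$: writing $\mathcal U\subset T_xM$ for the open star-shaped region bounded by the first tangent conjugate locus, $\exp_x$ carries $\mathcal U$ diffeomorphically onto $M\setminus\mathrm{Conj}^1(x)$.

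Finally I would carry out the key step. Suppose, for contradiction, that $t_0v$ is not critical, where $t_0$ is the finite $F$-cut value along $\gamma_v$; since $t_0$ is smaller than the first conjugate value along $\gamma_v$, we have $t_0v\in\mathcal U$. Choose a second $F$-minimizing geodesic $\gamma_w|_{[0,t_0]}$, $F(w)=1$, $w\ne v$, from $x$ to $q:=\exp_x(t_0v)=\exp_x(t_0w)$. If the first critical value of $\exp_x$ along $\mathbb{R}_{>0}w$ is larger than $t_0$, then $t_0w\in\mathcal U$ as well, and $\exp_x(t_0v)=\exp_x(t_0w)$ with $t_0v\ne t_0w$ contradicts the injectivity of $\exp_x$ on $\mathcal U$. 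Otherwise that first critical value is at most $t_0$, hence (as $\gamma_w|_{[0,t_0]}$ minimizes and a geodesic does not minimize past its first conjugate point) it equals $t_0$; then $q=\exp_x(t_0w)\in\mathrm{Conj}^1(x)$, contradicting $q=\exp_x(t_0v)\in\exp_x(\mathcal U)=M\setminus\mathrm{Conj}^1(x)$. I expect the main obstacle to be exactly this last dichotomy — in particular the subcase where $t_0w$ is itself a first conjugate point — which is settled by the fact, extracted from the Riemannian symmetric structure of $(M,Q)$, that $\mathrm{Conj}^1(x)$ is disjoint from $\exp_x(\mathcal U)$; a secondary matter requiring care is that the facts quoted from \cite{BCS00} remain valid for possibly non-reversible $F$, and that $(M,Q)$ does inherit the structure of a Riemannian symmetric space.
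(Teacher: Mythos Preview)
Your argument is correct, and it rests on the same three inputs as the paper's proof --- Szab\'o's Riemannian metric $Q$ with the same connection, Crittenden's theorem for the simply connected symmetric space $(M,Q)$, and the standard dichotomy for cut points from \cite{BCS00} --- but you organize them differently. The paper first asserts that Crittenden's proof carries over verbatim to the compact Berwald case, and then, to handle the general case, invokes the Berwald product decomposition $(M,F)=(M_0,F_0)\times(M_1,F_1)\times(M_2,F_2)$ together with Lemma~\ref{lem3.2} on distances in orthogonal products to reduce everything to the compact factor. You instead apply Crittenden directly to the global Riemannian space $(M,Q)$, extract from it the single fact that $\exp_x\colon\mathcal U\to M\setminus\mathrm{Conj}^1(x)$ is a diffeomorphism (a statement only about the connection, hence equally valid for $F$), and then run a short two-case argument on the second $F$-minimizing geodesic. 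This avoids the product decomposition and the appeal to Lemma~\ref{lem3.2} altogether, and it does not require rereading Crittenden's proof to check that each step is metric-independent; the price is that you must justify carefully that $(M,Q)$ is globally Riemannian symmetric and that the Finsler facts from \cite{BCS00} you use hold in the non-reversible setting, both of which you have noted.
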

\begin{proof}
The proof for the Riemannian case of this result was provided by Crittenden  in \cite{CR61}. As in Remark \ref{szabo}, given a GASBS $(M, F)$ there is a
Riemannian metric $Q$ on $M$ whose Levi-Civit\`a connection coincides with
the Chern connection of $(M, F)$. Then $(M, Q)$ is a Riemannian symmetric
space. Hence the
conjugate points of $x$ in $(M, F)$ and in $(M, Q)$ are the same sets. Based on this observation and Corollary 8.2.2 in \cite{BCS00}, which asserts that
if $x'$ is the cut point to $x$ along a geodesic $\gamma$, then either $x'$ is the first conjugate point along $\gamma$ or there exists at least two distinct
geodesics with the same arc length from $x$ to $x'$, the proof of  Theorem 4 of  \cite{CR61} applies also to the compact Berwaldian case without any change.
Thus the lemma is true when the full group of isometries of $(M, F)$ is compact semi-simple. For the general case,  notice  the following facts:
\begin{description}
\item{(1)}\quad The globally affine symmetric space $(M, F)$ can be decomposed into the product
$$(M, F)=(M_0, F_0)\times (M_1, F_1)\times(M_2, F_2).$$
where $(M_0, F_0)$ is a Minkowski space and $(M_1, F_1)$ (resp. $(M_2, F_2)$) is  the products of non-compact (resp. compact) irreducible globally
affine symmetric Berwald spaces.
\item{(2)}\quad The Minkowski space $(M_0, F_0)$ contributes nothing either
to the first conjugate loci or cut loci of $M$.
\item{(3)}\quad It is proved in \cite{DH05} that the non-compact
$(M_1, F_1)$ has flag curvature everywhere $\leq 0$. Hence by the Cartan-Hadamard theorem (\cite{BCS00}), $(M_1, F_1)$ contributes nothing to either
the first conjugate loci or the cut loci of $(M, F)$.
\end{description}
Now if $x'=(x_0', x_1', x_2')$ is the first conjugate point of $x=(x_0, x_1,x_2)$ along a geodesic $\gamma$. Then we have $x_0'=x_0$ and $x_1'=x_1$.
Thus $x_2'$ must be the first conjugate point to $x_2$ in $(M_2, F_2)$ along $p_2(\gamma)$. By the above argument, $x_2'$ must be the cut point of $x_2$
along $p_2(\gamma)$ (in $(M_2, F_2)$).
By Lemma \ref{lem3.2},  $x'$
must be the cut point of $x$ along $\gamma$. The converse assertion is obvious.
\hfill \end{proof}
\medskip

Here is the next step in reducing the proof of Theorem \ref{maintheorem} to
the flat and the irreducible cases.

\begin{thm}\label{thm3.2}
Let $(M, F)$ be a connected simply connected GASBS and suppose
that $(M, F)$ is decomposed into the Berwald product
$$(M, F)=(M_0, F_0)\times (M_1, F_1)\times (M_2, F_2),$$
where $(M_0, F_0)$ is a Minkowski space and $(M_1, F_1)$ (resp. $(M_2, F_2)$)
is  a GASBS  of noncompact type (resp. compact type). Suppose
$\sigma$ is a Clifford translation of $(M, F)$. Then there are Clifford
translations of $\sigma_i$ of $(M_i, F_i)$, $i=0, 1,2$, such that
$$\sigma=\sigma_0\times \sigma_1\times \sigma_2.$$
Moreover, $\sigma_0$ must be an ordinary translation of the affine space
underlying $M_0$\,, and $\sigma_1$ must be trivial.
\end{thm}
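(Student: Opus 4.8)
The plan is to prove the statement in three stages: (I) that $\sigma$ must be compatible with the coarse decomposition, so that $\sigma=\sigma_0\times\sigma_1\times\sigma_2$ with each $\sigma_i$ an isometry of $(M_i,F_i)$; (II) that each $\sigma_i$ is in fact a Clifford translation of $(M_i,F_i)$; and (III) that, being Clifford translations of a Minkowski space and of a symmetric Finsler space of noncompact type respectively, $\sigma_0$ must be an ordinary translation and $\sigma_1$ must be trivial — the last stage being immediate from the descriptions of Clifford translations obtained earlier in this section.

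For stage (I): since a GASBS is a Berwald space and every isometry preserves the Chern connection, $\sigma$ is an affine transformation of that connection. There is no de Rham theorem for Finsler spaces, so instead I would use that the decomposition $M=M_0\times M_1\times M_2$ is intrinsic to the Chern connection alone: via the associated Riemannian metric $Q$ of Remark \ref{szabo} (which here is symmetric, as in the proof of Lemma \ref{cutloci}), $M_0$ is the flat de Rham factor, while the splitting of the remainder into $M_1$ and $M_2$ is detected by the sign of the Ricci tensor of the connection (negative definite on $M_1$, positive definite on $M_2$); equivalently one may use the canonical decomposition $\mathfrak g=\mathfrak g_0\oplus\mathfrak g_1\oplus\mathfrak g_2$ of the transvection algebra into its abelian, noncompact semisimple, and compact semisimple parts. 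An affine transformation preserves the Ricci tensor, hence the three integrable distributions tangent to the $M_i$ and all of their sums; tracking the corresponding foliations of $M$ then forces $\sigma(x_0,x_1,x_2)=(\sigma_0(x_0),\sigma_1(x_1),\sigma_2(x_2))$ for diffeomorphisms $\sigma_i$ of $M_i$. Since $F$ restricts to $F_i$ on $T(M_i)\setminus\{0\}$ (Definition \ref{defn3.1}(1)) and $\sigma$ preserves $F$, each $\sigma_i$ preserves $F_i$, i.e.\ $\sigma_i\in I(M_i,F_i)$.

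For stage (II): write $\delta=d(x,\sigma(x))$, a constant; we may assume $\delta>0$. Fix $x=(x_0,x_1,x_2)$, choose a minimal geodesic $\gamma$ of $(M,F)$ from $x$ to $\sigma(x)$ (it exists by completeness) and extend it to a line; by Theorem \ref{oz}, $\sigma$ preserves $\gamma$. By Definition \ref{defn3.1}(2), $\gamma=(\gamma_0,\gamma_1,\gamma_2)$ with each $\gamma_i$ a geodesic of $M_i$, and $\sigma(\gamma(t))=\gamma(t+c)$ forces $\sigma_i(\gamma_i(t))=\gamma_i(t+c)$ with one common constant $c>0$, so that $\gamma_i|_{[0,c]}$ runs from $x_i$ to $\sigma_i(x_i)$ and is preserved by $\sigma_i$. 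If each $\gamma_i|_{[0,c]}$ is minimal in $(M_i,F_i)$, then Theorem \ref{oz} applied inside $(M_i,F_i)$ shows that $\sigma_i$ is a Clifford translation. This minimality is clear for $M_0$ (Minkowski: every geodesic is minimal) and for $M_1$ (noncompact type: flag curvature $\le 0$, so by the Cartan--Hadamard theorem there are no conjugate points and, $M_1$ being simply connected, every geodesic is minimal). For the compact factor $M_2$ I would argue as in the proof of Lemma \ref{cutloci}: since geodesics of the product are products of geodesics, $\exp^M_x$ is the product of the $\exp^{M_i}_{x_i}$, and because $M_0,M_1$ contribute no conjugate points, $\gamma(t_0)$ is conjugate to $\gamma(0)$ along $\gamma$ if and only if $\gamma_2(t_0)$ is conjugate to $\gamma_2(0)$ along $\gamma_2$; minimality of $\gamma|_{[0,c]}$ excludes conjugate points in $(0,c)$, hence the first conjugate point of $\gamma_2$ occurs at parameter $\ge c$, and by Lemma \ref{cutloci} so does the cut point of $\gamma_2$, so $\gamma_2|_{[0,c]}$ is minimal in $M_2$.

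Stage (III) is then immediate: $\sigma_0$ is a Clifford translation of the Minkowski space $(M_0,F_0)$, and since an isometry of a Minkowski space has the form $x\mapsto Ax+b$ with $x\mapsto F_0((A-I)x+b)$ bounded only when $A$ is the identity, $\sigma_0$ is an ordinary translation; similarly $\sigma_1$ is a Clifford translation of a symmetric Finsler space of noncompact type, hence trivial. The step I expect to be the main obstacle is stage (II), specifically the compact factor: in the Riemannian case ``each $\sigma_i$ is Clifford'' is read off at once from the Pythagorean identity $d(x,\sigma x)^2=\sum_i d_i(x_i,\sigma_i x_i)^2$, which has no Finsler counterpart, so one cannot simply project distances; the substitute is to project a minimal geodesic and recover its minimality through the conjugate-locus/cut-locus coincidence of Lemma \ref{cutloci}. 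Establishing in stage (I) that the coarse decomposition is canonical for the Chern connection — in the absence of a Finsler de Rham theorem — is the other point that requires care.
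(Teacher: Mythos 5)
Your stages (I) and (II) are essentially the paper's argument (the paper is actually terser in stage (I), merely asserting that $\sigma$ preserves each coarse factor, so your Ricci-tensor/de Rham justification is a welcome addition), and your argument for $\sigma_0$ in stage (III) is a valid and somewhat more direct variant of the paper's appeal to \cite{DH04} and \cite{WO60}. However, there is a genuine gap in stage (III) for $\sigma_1$.

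You dispose of $\sigma_1$ with ``similarly $\sigma_1$ is a Clifford translation of a symmetric Finsler space of noncompact type, hence trivial,'' and earlier say this is ``immediate from the descriptions of Clifford translations obtained earlier in this section.'' But no such description appears earlier in the section: Theorem \ref{thm3.2} is precisely where the triviality of Clifford translations of noncompact-type GASBS is first established, so you are assuming what needs to be proved. Nothing analogous to your $\sigma_0$ argument is available here either — unlike the Minkowski case, the isometry group of $M_1$ is not an affine group, and the unboundedness trick has no counterpart. The fact that a nontrivial Clifford translation of a noncompact-type symmetric space cannot exist is a nontrivial rigidity result. The paper's argument: if $\sigma_1$ were nontrivial, Cartan--Hadamard for $(M_1,F_1)$ (which has flag curvature $\le 0$) gives, for each $x_1$, a unique unit-speed minimal geodesic from $x_1$ to $\sigma_1(x_1)$, and the initial tangent vectors assemble to a vector field $X$ of constant $F_1$-length $1$. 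Passing to Szab\'o's associated Riemannian metric $Q_1$ (Remark \ref{szabo}), which makes $(M_1,Q_1)$ a Riemannian symmetric space of noncompact type, the field $X$ has bounded $Q_1$-length (both metrics are homogeneous), and then the main theorem of \cite{WO62} forces $X$ to be a nonzero $Q_1$-parallel vector field — impossible on a noncompact-type symmetric space, which has no Euclidean de Rham factor. You need some version of this argument; the step cannot be dismissed as immediate.
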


\begin{proof}
Suppose $\sigma$ is a Clifford translation of $(M, F)$. Then as an isometry of $(M, F)$,
$\sigma$ must keep each of the  $M_0$, $M_1$ and $M_2$ invariant. Thus
there are isometries $\sigma_i$ of $M_i$, $i=0,1,2$,  such that
$\sigma=\sigma_0\times \sigma_1\times \sigma_2$.
We now prove that $\sigma_i$, $i=0,1,2$,  is a Clifford translation of $(M_i, F_i)$. Let $x_i\in M_i$, $i=0,1,2$,  and suppose
$\gamma$ is a minimal geodesic of unit speed in $(M, F)$ connecting $x=(x_0, x_1, x_2)$ and $\sigma(x)$. We assert that $\gamma_i=p_i(\gamma)$, $i=0, 1,2$,
is a minimal geodesic of $(M_i, F_i)$ connecting $x_i$ and $\sigma _i(x_i)$. For $i=0$ or $1$ this is obvious, since any geodesic must be minimal in $M_0$ or $M_1$.
Suppose conversely that $p_2(\gamma)$ is not a minimal geodesic between $x_2$ and $\sigma_2(x_2)$. Suppose $\sigma_2(x_2)=\gamma_2(t_0)$, $t_0>0$. Then there
is $t_1\in (0, t_0)$ such that $\gamma_2(t_1)$ is a cut point of $x_2$ along $\gamma_2$. By Lemma \ref{cutloci}, $\gamma_2(t_1)$ must be the first conjugate point
of $x_2$ along the geodesic $\gamma_2$. But then $\gamma(t_1)=(\gamma_0(t_1), \gamma_1(t_1), \gamma_2(t_1))$ must be a conjugate point of $x$ along the
geodesic $\gamma$, since the connection of $(M, F)$ is the product of that of the $(M_i, F_i)$, $i=0,1,2$. This implies that $\gamma$ cannot be minimal
between $x$ and $\gamma (t)$, for any $t>t_1$. That is a contradiction. Hence $\gamma_i$ are all minimal between $x_i$ and $\gamma_i(x_i)$, $i=0,1,2$. Now
by Lemma \ref{oz}, there is a constant $a\in {\mathbb R}$ such that $\sigma (\gamma(t))=\gamma (t+a)$. Then $\sigma_i(\gamma_i(t))=\gamma_i(t+a)$, $i=0,1,2$.
Using Lemma \ref{oz} again, we conclude that $\sigma_i$ is a Clifford translation of $(M_i, F_i)$, for $i=0,1,2$.
\smallskip

Suppose $G$ is the full group of isometries  of the Minkowski space
$(M_0, F_0)$. Let $L$ be the isotropy subgroup of $G$ at the origin $0$.
Then by Theorem 3.1 of \cite{DH04}, there exists $G$-invariant Riemannian
metric $Q$ on $M_0$. Since $G$ must contain all the parallel translations of
the linear
vector space, $Q$ is a euclidean metric and $G$ is the semidirect product of
its translation subgroup with $L$.  It follows (see \cite{WO60}) that a
Clifford translation of $(M_0, F_0)$ must be an ordinary translation.
\smallskip

Now we consider $(M_1, F_1)$, which is a GASBS  of noncompact type. If the
Clifford translation $\sigma_1$ is nontrivial, then for any $x_1\in M_1$
there is a unique geodesic $\zeta_{x_1}$ of unit speed connecting $x_1$ and
$\sigma_1(x_1)$. Denote the initial tangent
vector of $\zeta_{x_1}$ by $X_{x_1}$. We obtain a smooth vector field of $X$ on $(M_1, F_1)$ which has length $1$ everywhere. By the above mentioned result
of Szab\'{o}, there is a Riemannian metric $Q_1$
on $M_1$ whose Levi-Civit\`a connection coincides with the Chern connection of $(M_1, F_1)$. But then $(M_1, Q_1)$ is a Riemannian symmetric space which is
the product of irreducible Riemannian symmetric spaces of the non-compact type. Since both $F_1$ and $Q_1$ are homogeneous metrics on $M_1$, the assumption
that the vector field $X$ has constant length $1$ with respect to $F_1$ implies that it has bounded length with respect to $Q_1$. Now by the main theorem of the
second author's article \cite{WO62}, $X$ must be a nonzero parallel vector
field with respect to the Levi-Civit\`a connection of $Q_1$. This is
impossible. Hence any Clifford translation of $(M_1, F_1)$ must be trivial.
This completes the proof of the theorem. \hfill
\end{proof}

\section{Clifford translations of compact Finsler symmetric spaces}
\label{sec4}
\setcounter{equation}{0}
Now we study Clifford translations of connected simply connected
GASBS's of compact type, completing the considerations of Theorem
\ref{thm3.2} and then reducing the analysis of Clifford translations to the
case of irreducible compact Riemannian symmetric spaces.
\smallskip

Recall that if $x$ is a point in a compact Finsler manifold $(M, Q)$, then
a point $x'$ is called an antipodal point to $x$ if it is of maximal distance
from $x$. We denote the set of antipodal
points of $x$ by $A_x$ and call it the  antipodal set of $x$.

\begin{thm}
Let $(M, F)$ be a GASBS whose full group of isometries is compact and
semisimple. Suppose $\sigma$ is a
Clifford translation of $(M, F)$ and $(M, F)$ has a decomposition
\begin{equation}\label{gasbs-compact}
(M, F)=(M_1, F_1)\times \dots \times (M_s, F_s)
\end{equation}
where each $(M_j, F_j)$ is  an irreducible GASBS.
Then {\rm (\ref{gasbs-compact})} is an orthogonal product of Finsler
spaces, and $\sigma$ decomposes as
$$\sigma=\sigma_1\times\dots\times \sigma_s$$
where $\sigma_j$ is a Clifford translation of $(M_j, F_j)$ for $j=1,\dots, s$.
\end{thm}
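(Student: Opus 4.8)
The plan is to argue in three stages: (i) the Berwald decomposition (4.1) is an orthogonal product in the sense of Definition~\ref{defn3.1}; (ii) a Clifford translation cannot interchange isometric factors, so $\sigma$ preserves each $M_j$; and (iii) the restrictions $\sigma_j=\sigma|_{M_j}$ are Clifford translations, by the reasoning already used for Theorem~\ref{thm3.2}. For stage (i): by \cite{DH07} the decomposition (4.1) comes from a splitting of the associated Minkowski symmetric Lie algebra into irreducible summands of compact type, $\mathfrak g=\bigoplus_{j=1}^s\mathfrak g_j$ with $\mathfrak g_j=\mathfrak h_j+\mathfrak m_j$ and $F|_{\mathfrak m_j}=F_j$. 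Conditions (1) and (2) of Definition~\ref{defn3.1} are immediate: the underlying manifold is the product, and the Chern connection of $(M,F)$ --- the canonical connection of $\mathfrak g=\bigoplus\mathfrak g_j$ --- is the product of the Chern connections of the $(M_j,F_j)$, so a curve is a geodesic exactly when all its projections are. For condition (3), fix $y\in\mathfrak m_i\setminus\{0\}$ and an index $k\ne i$; in the defining identity of a Minkowski symmetric Lie algebra put $u=y$ and $x\in\mathfrak h_k$. Since $[\mathfrak g_k,\mathfrak g_i]=0$ we have $[x,y]=0$, so the identity collapses to $g_y(y,[x,v])=0$ for all $x\in\mathfrak h_k$, $v\in\mathfrak m_k$. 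As $\mathfrak g_k$ is irreducible of compact type, the isotropy action of $\mathfrak h_k$ on $\mathfrak m_k$ is nontrivial and irreducible, so $[\mathfrak h_k,\mathfrak m_k]=\mathfrak m_k$, whence $g_y(y,\mathfrak m_k)=0$. Letting $k$ run over the indices $\ne i$ and transporting by transvections gives condition (3) everywhere, so (4.1) is an orthogonal product and Lemmas~\ref{lem3.1}, \ref{lem3.2} apply; regrouping the factors into blocks, a minimal geodesic between two points is in fact constant in every block in which its endpoints agree.

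For stage (ii): by Remark~\ref{szabo} there is a Riemannian metric $Q$ with the same (Levi-Cività $=$ Chern) connection; since $\mathrm{I}(M,F)$ is compact and semisimple, $(M,Q)$ is a compact Riemannian symmetric space with no Euclidean factor, and its de~Rham decomposition is $\prod_j(M_j,Q_j)$ into irreducibles. The Clifford translation $\sigma$ is affine for this connection, hence lies in $\mathrm{I}(M,Q)$ and permutes the de~Rham factors, inducing isometries between isometric ones. If this permutation had a cycle $M_1\to M_2\to\cdots\to M_k\to M_1$ with $k\ge 2$, given by isometries $\phi_j:M_j\to M_{j+1}$, one chooses $x$ with its coordinates in the block placed on a suitable diagonal so that $x$ and $\sigma(x)$ agree in most of the block while $x$ is still free in the remaining block coordinates; then any minimal geodesic $\gamma$ from $x$ to $\sigma(x)$ is constant in the coinciding coordinates, while $\sigma$ preserves $\gamma$ by Theorem~\ref{oz}, and matching the components on the two sides of $\sigma(\gamma(t))=\gamma(t+a)$ contradicts the remaining freedom in $x$ --- exactly the displacement-function argument of \cite{OZ74} transplanted to the Finsler setting, with Lemmas~\ref{lem3.1}--\ref{lem3.2} supplying the estimates. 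This is the main obstacle: everything else is either a direct consequence of the compatibility identity and irreducibility (stage (i)) or a repetition of the argument for Theorem~\ref{thm3.2} (stage (iii)), but here the constant-displacement hypothesis must be combined carefully with the precise distance estimates, and it is where the delicacy of the compact case is concentrated. Granting this, $\sigma(M_j)=M_j$ for every $j$, so $\sigma=\sigma_1\times\cdots\times\sigma_s$ with $\sigma_j=\sigma|_{M_j}$ an isometry of $(M_j,F_j)$.

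For stage (iii): fix $x=(x_1,\dots,x_s)$ and a minimal unit-speed geodesic $\gamma$ of $(M,F)$ from $x$ to $\sigma(x)$. Its projection $\gamma_j=p_j(\gamma)$ is a geodesic of $(M_j,F_j)$ from $x_j$ to $\sigma_j(x_j)$, and it is minimal: a cut point of $\gamma_j$ strictly before $\sigma_j(x_j)$ would, by Lemma~\ref{cutloci}, be a first conjugate point, hence --- the Chern connection being a product --- a conjugate point of $x$ along $\gamma$, contradicting the minimality of $\gamma$. By Theorem~\ref{oz}, $\sigma(\gamma(t))=\gamma(t+a)$ for some $a$; projecting, $\sigma_j(\gamma_j(t))=\gamma_j(t+a)$, so $\sigma_j$ preserves the minimal geodesic $\gamma_j$ from $x_j$ to $\sigma_j(x_j)$. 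Since $x_j\in M_j$ was arbitrary, Theorem~\ref{oz} applied to $(M_j,F_j)$ shows that $\sigma_j$ is a Clifford translation of $(M_j,F_j)$, which completes the proof.
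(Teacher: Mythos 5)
Your stages (i) and (iii) match the paper's proof in substance. For (i) you invoke the Minkowski symmetric Lie algebra identity with $u=y$, $x\in\mathfrak h_k$, kill the bracket and Cartan terms, and use irreducibility ($[\mathfrak h_k,\mathfrak m_k]=\mathfrak m_k$) to get $g_y(y,\mathfrak m_k)=0$; this is exactly the paper's argument. For (iii) the conjugate-locus/Lemma \ref{cutloci} argument followed by Theorem \ref{oz} is likewise identical to what the paper does.

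Stage (ii), however, is a genuine gap --- and it is precisely the part the paper calls ``one of the most delicate parts of the proof.'' You describe the key step (ruling out nontrivial cycles in the permutation $\tau$) only as: choose $x$ ``on a suitable diagonal,'' note that minimal geodesics are constant in the agreeing coordinates, and then ``matching the components \ldots contradicts the remaining freedom in $x$.'' You then write ``Granting this,'' which is an explicit punt. As stated, this is too vague to verify: it does not say which diagonal, what ``remaining freedom'' is contradicted, or how. The paper's actual argument here is substantial and multi-stage: after reducing to a single cycle, it first shows the cycle length is at most $2$ by comparing $d\bigl((p,\dots,p),\sigma(p,\dots,p)\bigr)$ with $d\bigl((p,q,p,\dots,p),\sigma(\cdot)\bigr)$ via Lemma \ref{lem3.2}; then for length-$2$ cycles it shows $f_1$ must be the (unique-)antipodal map; and finally it computes the displacement $\sqrt{(l-t)^2+t^2}$ along a geodesic from $x$ to its antipode and observes it is nonconstant, contradicting that $\sigma$ is Clifford. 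There is a cleaner route lurking in your sketch --- take $x=(p,\dots,p)$, note any minimal geodesic $\gamma$ to $\sigma(x)$ has $p_j(\gamma)\equiv p$ for $j\ge2$ by Lemmas \ref{lem3.1}--\ref{lem3.2}, then compare the second coordinate of $\sigma(\gamma(t))$ (which is $\gamma_1(t)$) with that of $\gamma(t+a)$ (which is $p$) to force $\gamma_1$ constant, a contradiction since $\sigma$ has no fixed points --- but you do not carry this out, and your phrase about ``remaining freedom in $x$'' does not describe it. Until that step is actually written down, the proof of the theorem is incomplete at its central point.

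A small secondary issue in stage (ii): you assert that $(M,Q)$'s de Rham decomposition ``is $\prod_j(M_j,Q_j)$''; this requires a word about why the Szab\'o metric $Q$ can be taken as a product of Szab\'o metrics for the factors (uniqueness of $Q$ is only up to affine equivalence, with independent scaling of the irreducible factors). The paper sidesteps this by working directly in the Finsler category with Lemmas \ref{lem3.1}--\ref{lem3.2} rather than passing to the Riemannian de Rham decomposition for the permutation step.
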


\begin{proof}
We first prove that (\ref{gasbs-compact}) is an orthogonal product of Finsler
spaces.
The condition (1) of Definition \ref{defn3.1} is obviously satisfied. Since
the affine connection of $M$ is the product of the  affine connections of
$(M_j, F_j)$, (2) is also satisfied. Now we proceed to prove (3).
Let ${\mathfrak g}$ and ${\mathfrak g}_j$  be the respective Lie algebras of
identity component of the full groups of isometries of the Berwald spaces
$(M, F)$ and $(M_j, F_j)$. Suppose that
${\mathfrak g}={\mathfrak k}+{\mathfrak p}$ and
${\mathfrak g}_j={\mathfrak k}_j+{\mathfrak p}_j$ are the canonical
decompositions of ${\mathfrak g}$ and ${\mathfrak g}_j$. Then with respect
to those decompositions, $({\mathfrak g}, F)$ and $({\mathfrak g}_j, F_j)$
are Minkowski symmetric Lie algebras. Fixing  a designated
 origin $x=(x_1, x_2,\dots, x_s)$ of $M$, we identify the tangent space
$T_x(M)$ with ${\mathfrak p}$, and each $T_{x_j}(M_j)$ with ${\mathfrak p}_j$.
Now suppose $y\in {\mathfrak p}_\ell\setminus \{0\}$ and
$v\in {\mathfrak p}_j\setminus\{0\}$  with $\ell\ne j$. By the definition of a
Minkowski Lie algebra, we have (taking $u=y$)
$$g_y([w, y], v)+g_y(y, [w, v])+2C_y([w, y], y, v)=0$$
for any $w\in {\mathfrak k}$ and $v\in {\mathfrak p}_j$. By (\ref{Cartan}),  the third term in the left side of the equation is equal to $0$. Moreover,
$[w, y]=0$, for any $w\in \bigoplus_{i\ne \ell} {\mathfrak k}_i$. Thus
we have $g_y(y, [w, v])=0$, for any $w\in \bigoplus_{i\ne \ell} {\mathfrak k}_i$ and
$v\in {\mathfrak p}_j$. Since $({\mathfrak g}_j, F_j)$ is an irreducible
Minkowski symmetric Lie algebra, we have
$[{\mathfrak k}_j, {\mathfrak p}_j]={\mathfrak p}_j$. From this we deduce
that  $g_y(y, v)=0$, for any $v\in {\mathfrak p}_j$. This completes the
argument that (\ref{gasbs-compact}) is an orthogonal product of Finsler
spaces.
\smallskip

 Now the action of $\sigma$ can be written as
 $$\sigma(x_1,x_2,\dots, x_s)=(f_1(x_{\tau(1)}), f_2(x_{\tau(2)}),\dots, f_s(x_{\tau(s)}),$$
 where $\tau$ is a permutation of $(1,2,\dots,s)$ and $f_j$ is an isometry from $M_{\tau(j)}$ onto $M_j$. Using a similar argument about the minimal geodesics
 as above, we can prove that if $\tau$ is the trivial permutation, then each $f_j$ is a Clifford translation of $(M_j, F_j)$. Next we consider the case
 that $\tau$ is nontrivial. Then $\tau$ is the product of disjoint cycles. To obtain a complete understanding, we first consider the case that
 $\tau=(1,2,\dots,s)$. Then the Berwald spaces $(M_i, F_i)$ are isometric to each other. Thus we can identify each $(M_j, F_j)$ with $(M_1, F_1)$ and suppose
 $$\sigma (x_1,x_2,\dots, x_s)=(f_1(x_s), x_1,\dots, x_{s-1}),\quad x_i\in M_1.$$
 As $\sigma$ is Clifford,
 $$d(x, \sigma (x))=d((x_1, x_2,\dots, x_s), (f_1(x_1), x_1,\dots, x_{s-1}))$$
 is a constant $c>0$. The choice $x=(x_1, x_1,\dots, x_1)$ would give us
 $$c=d((x_1, x_1,\dots,x_1),(f_1(x_1),x_1,\dots, x_1)).$$
 By Lemma \ref{lem3.2}, we get that $d_1(x_1, f_1(x_1))=c$, where $d_1$ is the distance function of $(M_1, F_1)$. On the other hand, if $s>2$, then  we can
 take $x_2\ne x_1=x_s$ in $M_1$. Then we also have
 $$c=d((x_1, x_2,\dots, x_s), (f_1(x_1), x_1,\dots, x_{s-1}))=d_1(x_1, f_1(x_1)).$$
 This is a contradiction with Lemma \ref{lem3.2}. Therefore $s\leq 2$.

 If $s=2$, considering the point $(x_1, x_1)$ we get
 $$c=d((x_1, x_1), (f_1(x_1), x_1))=d_1(x_1, f_1(x_1)).$$
On the other hand, suppose $x_2$ is an antipodal point in $M_1$ of maximal distance from $x_1$  and $l=d_1(x_1, x_2)$. Then we also have
$$c=d((x_2, x_1), (f_1(x_2), x_2))\geq d_1(x_1, x_2)=l.$$
This implies that  $c=l$. Moreover, by Lemma \ref{lem3.2}, we must also have $x_1=f_1(x_2)$. This means that each point $x_1$ of $(M_1, F_1)$
has a unique antipodal point
and $f_1(x_1)$ is exactly the antipodal point of $x_1$. In the following, we denote the (unique) antipodal point of $x\in M_1$ by $A_{x}$.
Then $f_1(x)=A_x$, for any $x\in M_1$. In particular, $f_1$ is a Clifford translation of $(M_1, F_1)$.
\smallskip

In summary, we have shown that $(M, F)$ has a $\sigma$--invariant
decomposition
$$(M, F)=(M_1, F_1)\times \dots\times (M_k, F_k)\times
	(M_1', F_1')\times \dots \times (M_t', F_t'),$$
 where $(M_j, F_j)$ is an irreducible GASBS for $j=1, 2,\dots,k$
and $(M_j', F_j')$ is a  Berwald product of two copies of  an irreducible GASBS
$(N_j, L_j)$ for  $j=1,2,\dots, t$.  Here $s = k+2t$, but $k$ or $t$ may be $0$.
The point is that  $\sigma$ decomposes as
 $$\sigma=\sigma_1\times\dots\times \sigma_k\times \sigma_1'\times\dots\times \sigma_t',$$
 where $\sigma_j$ is a Clifford translation of $(M_j, F_j)$ for
$j=1,\dots, k$, and $\sigma_j'$  is a Clifford translation of $(M_j', F_j')$
for $j = 1,\dots , t$.
\smallskip

Suppose $t\ne 0$.
  Let $Q_j$ be a Riemannian metric on $N_j$ whose Levi-Civit\`{a} connection coincides the Chern connection of $L_j$. Using a similar argument on minimal geodesics as above, one can easily show that $\sigma'_j$ is also a Clifford translation of the Riemannian product $(N_j, Q_j)\times (N_j, Q_j)$. Moreover, by the arguments above, $\sigma_j'$ must be
 of the form $\sigma_j'(x_1, x_2)=(A_{x_2}, x_1)$, where $A_{x_2}$ is the unique antipodal point of $x_2$ with respect to $Q_j$. Now we  fix $x_1$ and let $\gamma(t)$, $0\leq t\leq l_j$  be a minimal geodesic of $(N_, Q_j)$ (with unit speed) connecting
  $x_1$ and $A_{x_1}$.   Since
  $$d_j(x_1, A_{\gamma(t)})=d_j(A_{x_1}, A_{A_{\gamma(t)}})=d_j(A_{x_1}, \gamma(t))=l_j-t,$$
  where $d_j$ denotes the distance of $(N_j, Q_j)$, we get that, with respect to the distance $d_j'$ of $(N_j, Q_j)\times (N_j, Q_j)$,
  $$d_j'((x_1, \gamma(t)), \sigma_j'(x_1,\gamma(t)))=\sqrt{d_j^2(x_1, A_{\gamma(t)})+d^2_j(\gamma(t), x_1)}=\sqrt{(l_j-t)^2+t^2},$$
which is not a constant for $0\leq t\leq l_j$. Therefore $\sigma_j'$
is not a Clifford translation. This is a contradiction. Hence $t$ must be zero and the proof is completed.
\hfill \end{proof}

\section{Proof of Theorem $\mathbf{1.1}$} \label{sec5}
\setcounter{equation}{0}
In this section we complete the proof of Theorem \ref{maintheorem} by
reducing it to the known Riemannian case \cite{WO62}.  The main tools
for that reduction are the Szab\' o construction of Remark \ref{szabo} and
our product decompositions of Sections \ref{sec3} and \ref{sec4}.  We also
need a result from Kobayashi and Nomizu \cite[Chapter VI, Theorem 3.5]{KN63}:

\begin{prop}\label{KN}
Let $(M,Q)$ be a complete irreducible Riemannian manifold with $\dim M > 1$.
Let $\mathfrak I(M,Q)$ denote the group of all isometries of $(M,Q)$, and
$\mathfrak A(M,Q)$ the group of all
affine diffeomorphisms of $(M,Q)$.  Then $\mathfrak A(M,Q) = \mathfrak I(M,Q)$.
\end{prop}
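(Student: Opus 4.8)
The inclusion $\mathfrak I(M,Q)\subseteq\mathfrak A(M,Q)$ is immediate, since an isometry preserves the Levi-Civit\`a connection and hence is an affine diffeomorphism. For the reverse inclusion the plan is two-step: given $f\in\mathfrak A(M,Q)$, first show that $f^*Q=c\,Q$ for a single positive constant $c$, and then show $c=1$. The first step rests on a holonomy/Schur argument, the second on completeness.

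For the first step, write $\nabla$ for the Levi-Civit\`a connection of $Q$ and put $g:=f^*Q$. I would begin by observing that $g$ has the same Levi-Civit\`a connection $\nabla$: the map $f\colon(M,g)\to(M,Q)$ is a Riemannian isometry by construction, so it carries the Levi-Civit\`a connection of $g$ to that of $Q$; on the other hand $f$ being affine means it carries $\nabla$ to itself; since $f_*$ is invertible these force $\nabla^g=\nabla$. Consequently, parallel transport along any loop at a point $x$ is a linear automorphism of $T_xM$ preserving both $Q_x$ and $g_x$, so in particular the restricted holonomy group $\Psi^0(x)$ of $(M,Q)$ preserves both inner products. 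Because $(M,Q)$ is irreducible, $\Psi^0(x)$ acts irreducibly on $T_xM$ (the local de Rham decomposition, see \cite{KN63}). Now a Schur-type argument applies: writing $g_x(u,v)=Q_x(Au,v)$ with $A$ a $Q_x$-self-adjoint positive operator, one checks $A$ commutes with $\Psi^0(x)$, hence its (real) eigenspaces are $\Psi^0(x)$-invariant, forcing $A=c\,\mathrm{Id}$ with $c>0$; thus $g_x=c\,Q_x$. Transporting along a curve from $x$ to an arbitrary point $y$ and using that parallel transport is a linear isometry for both $Q$ and $g$ gives $g_y=c\,Q_y$ with the same $c$, so by connectedness $f^*Q=c\,Q$ on all of $M$.

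The remaining step, $c=1$, is the main obstacle. Suppose $c\neq1$; replacing $f$ by $f^{-1}$, which satisfies $(f^{-1})^*Q=c^{-1}Q$, we may assume $c<1$, so $f$ is a homothety of ratio $\sqrt c<1$, i.e. $d_Q(f(p),f(q))=\sqrt c\,d_Q(p,q)$. By Hopf--Rinow $(M,d_Q)$ is a complete metric space, so the contraction $f$ has a fixed point $p_0$. Since $f$ is affine and fixes $p_0$, in $Q$-normal coordinates at $p_0$ it coincides with its differential $L:=df_{p_0}$, and $Q_{p_0}(Lu,Lv)=(f^*Q)_{p_0}(u,v)=c\,Q_{p_0}(u,v)$ shows $L=\sqrt c\,O$ with $O$ orthogonal. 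Iterating the coordinate identity $Q_{ij}(x)=c^{-1}L^{k}_{\ i}L^{l}_{\ j}\,Q_{kl}(Lx)$ and letting $n\to\infty$, so that $L^nx\to p_0$ where the metric in these coordinates equals the identity, yields $Q_{ij}\equiv\delta_{ij}$ on a whole normal neighborhood $U$ of $p_0$; thus $Q$ is flat on $U$. Finally, since $f$ is affine it preserves the curvature tensor, so the open set on which $R$ vanishes is $f$-invariant; as $f^{-1}$ expands distances we have $M=\bigcup_{n\ge0}f^{-n}(U)$, and therefore $R\equiv0$ on $M$. Hence $(M,Q)$ is complete and flat, so its restricted holonomy group is trivial, which does not act irreducibly on $T_xM$ when $\dim M>1$ — contradicting irreducibility. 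Therefore $c=1$, $f$ is an isometry, and $\mathfrak A(M,Q)=\mathfrak I(M,Q)$.
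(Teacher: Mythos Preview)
The paper does not prove this proposition; it is simply quoted from Kobayashi--Nomizu \cite[Chapter~VI, Theorem~3.5]{KN63}. Your argument is correct and follows the classical line of that reference: irreducibility of the restricted holonomy together with a Schur-type argument forces any $\nabla$-parallel symmetric $2$-tensor, in particular $f^*Q$, to equal $cQ$ for a constant $c>0$; completeness then rules out $c\neq1$ via the contraction-mapping fixed point and a flatness argument, contradicting irreducibility when $\dim M>1$.
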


This gives a bridge between Berwald spaces and Riemannian manifolds:

\begin{cor}\label{inj-isom}
Let $(M,F)$ be a complete irreducible Berwald space, $\dim M > 1$.  Let
$\mathfrak I(M,F)$ denote its isometry group.  Let $(M,Q)$ be a Riemannian
manifold whose Levi-Civit\` a connection coincides with the linear
connection of $(M,F)$, as in {\rm Remark \ref{szabo}}.  Then
$\mathfrak I(M,F)$ is a closed subgroup of $\mathfrak I(M,Q)$.
\end{cor}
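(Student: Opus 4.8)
The plan is to realize $\mathfrak I(M,F)$ inside $\mathfrak I(M,Q)$ as the set of $Q$-isometries that in addition preserve the Minkowski norm $F$, and then to observe that this is a closed condition. The first step is the inclusion $\mathfrak I(M,F)\subseteq\mathfrak I(M,Q)$. If $\phi\in\mathfrak I(M,F)$, then $\phi$ preserves the Chern connection of $(M,F)$ (as remarked after the Chern connection theorem); since $(M,F)$ is Berwald that connection is an honest affine connection on $M$, and by the construction of $Q$ in Remark \ref{szabo} it coincides with the Levi-Civit\`a connection of $Q$. Hence $\phi\in\mathfrak A(M,Q)$. To apply Proposition \ref{KN} I then check that $(M,Q)$ is complete and irreducible with $\dim M>1$: the dimension is unchanged; the constant-speed geodesics of $(M,F)$ and of $(M,Q)$ are the same curves, being governed by the same connection coefficients $\Gamma^i_{jk}(x)$, so completeness of $(M,F)$ forces geodesic completeness, hence (Hopf--Rinow) metric completeness, of $(M,Q)$; and the restricted holonomy representation of $Q$ is that of the common connection, so it is irreducible precisely because $(M,F)$ is an irreducible Berwald space. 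Proposition \ref{KN} now gives $\mathfrak A(M,Q)=\mathfrak I(M,Q)$, so $\phi\in\mathfrak I(M,Q)$, and the inclusion $\mathfrak I(M,F)\hookrightarrow\mathfrak I(M,Q)$ is a monomorphism of Lie groups, each carrying its compact-open topology.

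Next I would pin down the image by showing
$$\mathfrak I(M,F)=\{\phi\in\mathfrak I(M,Q):\ F(\phi_*v)=F(v)\ \text{for all}\ v\in TM\}.$$
The inclusion $\subseteq$ is immediate from the definition of an isometry of $(M,F)$. Conversely, if $\phi\in\mathfrak I(M,Q)$ and $F\circ\phi_*=F$ on $TM$, then $\phi$ preserves the $F$-length of every piecewise smooth curve, hence preserves the distance function $d$ of $(M,F)$, hence $\phi\in\mathfrak I(M,F)$; so $\supseteq$ holds and we have equality.

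For closedness, I would fix $v\in TM$ and consider the function $\phi\mapsto F(\phi_*v)-F(v)$ on $\mathfrak I(M,Q)$. In the compact-open topology, convergence $\phi_n\to\phi$ in $\mathfrak I(M,Q)$ forces $C^1$-convergence of the maps, hence $(\phi_n)_*v\to\phi_*v$ in $TM$; since $F$ is continuous on $TM$, this function is continuous. By the description above, $\mathfrak I(M,F)$ is the intersection, over $v$ in the $Q$-unit tangent bundle of $M$, of the zero sets of these continuous functions, and is therefore a closed subset of $\mathfrak I(M,Q)$. Combined with the first two steps, this shows $\mathfrak I(M,F)$ is a closed subgroup of $\mathfrak I(M,Q)$.

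I expect no deep obstacle; the one point needing genuine care is the transfer of irreducibility from $(M,F)$ as a Berwald space to $(M,Q)$ as a Riemannian manifold, which holds because the two share the same linear connection and hence the same holonomy representation. The essential external input is Proposition \ref{KN}; the completeness transfer and the topological bookkeeping are routine.
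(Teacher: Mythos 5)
Your proof is correct and follows essentially the same route as the paper: use the fact that a Finsler isometry preserves the Chern connection to embed $\mathfrak I(M,F)$ into $\mathfrak A(M,Q)=\mathfrak I(M,Q)$ via Proposition \ref{KN}, then observe closedness in the compact-open topology. You spell out the hypotheses-transfer (completeness, irreducibility) and the closedness argument in more detail than the paper does, but the strategy is the same.
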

\begin{proof} Since an isometry of a Berwald space preserves its Chern
connection, Proposition \ref{KN} gives us $\mathfrak I(M,F) \subset
\mathfrak I(M,Q)$.  These Lie groups carry the compact open
topology from their action on $M$, so $\mathfrak I(M,F)$ is closed
in $\mathfrak I(M,Q)$.
\hfill \end{proof}
\vskip 2pt

The result is stronger in the symmetric case:

\begin{thm}\label{equal-isom}
Let $(M,F)$ be a connected, simply connected irreducible GASBS of compact
type, and
$(M,Q)$ a Riemannian manifold whose Levi-Civit\` a connection coincides
with the linear connection of $(M,F)$, as in {\rm Remark \ref{szabo}}.
Then $\mathfrak I(M,F)$ is a subgroup of finite index in $\mathfrak I(M,Q)$
which contains the identity component.
\end{thm}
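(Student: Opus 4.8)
The plan is to combine Corollary \ref{inj-isom} with the Berwald property of $(M,F)$, the conclusion being that the $\mathfrak I(M,F)$-invariant Finsler metric $F$ is automatically invariant under the a priori larger group $\mathfrak I(M,Q)^{\circ}$. First I would reduce the statement to a single assertion. Since $(M,F)$ is an irreducible GASBS of compact type, $M$ is compact and, by Remark \ref{szabo}, $(M,Q)$ is a connected, simply connected, irreducible Riemannian symmetric space of compact type; in particular $\mathfrak I(M,Q)$ is a compact Lie group and hence has only finitely many connected components. By Corollary \ref{inj-isom}, $\mathfrak I(M,F)$ is a closed subgroup of $\mathfrak I(M,Q)$, and being connected, its identity component $\mathfrak I(M,F)^{\circ}$ lies in $\mathfrak I(M,Q)^{\circ}$. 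So it suffices to prove $\mathfrak I(M,Q)^{\circ}\subseteq\mathfrak I(M,F)$: granting that, $\mathfrak I(M,F)$ contains the identity component and therefore has index dividing $[\mathfrak I(M,Q):\mathfrak I(M,Q)^{\circ}]<\infty$.

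Fix a base point $o\in M$ and put $I:=\mathfrak I(M,Q)^{\circ}$ and $K:=I_{o}$, which is connected because $M$ is simply connected. Two facts drive the argument. First, $K$ preserves the Minkowski norm $F_{o}$ on $T_{o}M$: for a Berwald space, parallel translation with respect to the linear connection is a linear isometry between the Minkowski norms on the tangent spaces, so the holonomy group of that connection at $o$ preserves $F_{o}$; and since $(M,Q)$ is a simply connected Riemannian symmetric space of compact type, the holonomy group of its Levi-Civit\`a connection --- the very same connection --- coincides with the linear isotropy group $K$. Second, a GASBS is a symmetric, hence homogeneous, space, so $G:=\mathfrak I(M,F)^{\circ}$ already acts transitively on $M$ (for instance through the transvection group), $F$ is $G$-invariant, and the isotropy group $G_{o}=G\cap K$ is contained in $K$.

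Now I would run the homogeneous-extension argument. Given $x\in M$ and $v\in T_{x}M$, choose $a\in G$ with $a\cdot o=x$; by $G$-invariance of $F$ we have $F_{x}(v)=F_{o}\bigl((a^{-1})_{*}v\bigr)$. If $b\in I$ is any element with $b\cdot o=x$, then $a^{-1}b\in K$, so using the $K$-invariance of $F_{o}$,
$$F_{o}\bigl((b^{-1})_{*}v\bigr)=F_{o}\bigl((a^{-1}b)^{-1}_{*}(a^{-1})_{*}v\bigr)=F_{o}\bigl((a^{-1})_{*}v\bigr)=F_{x}(v).$$
Thus $F_{x}(v)=F_{o}\bigl((b^{-1})_{*}v\bigr)$ for every $b\in I$ carrying $o$ to $x$, which says precisely that $F$ is $I$-invariant. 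Hence $I=\mathfrak I(M,Q)^{\circ}\subseteq\mathfrak I(M,F)$, and the theorem follows from the reduction above.

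The step I expect to be the main obstacle is the first of the two facts: that $F_{o}$ is invariant under the full Riemannian isotropy group $K$, and not merely under the isotropy group of $\mathfrak I(M,F)$, which could a priori be strictly smaller. This is exactly where the Berwald hypothesis --- equivalently, that $F$ is parallel for the linear connection --- and the identification of the holonomy group of a simply connected symmetric space of compact type with its linear isotropy group are indispensable; once that invariance is in hand, the remaining steps are formal.
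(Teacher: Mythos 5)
Your proof is correct, but it takes a genuinely different route from the paper's one-line argument. The paper simply observes that the transvection group of $(M,F)$ --- the group generated by products of pairs of geodesic symmetries, which are $F$-isometries by the GASBS structure --- already equals $\mathfrak I(M,Q)^{\circ}$, because for an irreducible Riemannian symmetric space the transvection group is the identity component of the full isometry group; the finite-index statement is then automatic from compactness. You instead prove directly that $F$ is $\mathfrak I(M,Q)^{\circ}$-invariant: first you establish $K$-invariance of $F_{o}$ at a base point using the Berwald property (holonomy of the linear connection preserves the Minkowski norm) together with the identification of the restricted holonomy group of a simply connected symmetric space with its linear isotropy group, and then you propagate by homogeneity. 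What the paper's approach buys is brevity, at the cost of implicitly invoking the transvection-group structure theory; what yours buys is a more self-contained and transparent argument that isolates exactly where the Berwald hypothesis enters, and which makes the finite-index claim explicit via compactness of $\mathfrak I(M,Q)$. Both depend on nontrivial but standard facts about symmetric spaces, so neither is decisively more elementary.
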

\begin{proof}
Let $G$ denote the subgroup of $\mathfrak I(M,F)$ generated by products
of pairs of symmetries, in other words by transvections.  Since
$(M,Q)$ is an irreducible Riemannian symmetric space, $G$ is the
identity component of $\mathfrak I(M,Q)$.
\hfill \end{proof}

\begin{cor} \label{eq-cliff}
Let $(M,F)$ be a connected, simply connected irreducible GASBS of compact
type, and
$(M,Q)$ a Riemannian manifold whose Levi-Civit\` a connection coincides
with the linear connection of $(M,F)$, as in {\rm Remark \ref{szabo}}.
Let $\Gamma$ be a subgroup of $\mathfrak I(M,F)$.  Then
the centralizer of $\Gamma$ in $\mathfrak I(M,F)$ is
transitive on $M$ if and only if  the centralizer of $\Gamma$ in
$\mathfrak I(M,Q)$ is transitive on $M$.
\end{cor}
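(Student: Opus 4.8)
The plan is to read everything off Theorem \ref{equal-isom}. That theorem puts $\mathfrak I(M,F)$ inside $\mathfrak I(M,Q)$ as a (closed, finite-index) subgroup that contains the identity component $\mathfrak I(M,Q)^0$. Since $\Gamma\subseteq\mathfrak I(M,F)\subseteq\mathfrak I(M,Q)$, we always have $Z_{\mathfrak I(M,F)}(\Gamma)=Z_{\mathfrak I(M,Q)}(\Gamma)\cap\mathfrak I(M,F)$, so $Z_{\mathfrak I(M,F)}(\Gamma)\subseteq Z_{\mathfrak I(M,Q)}(\Gamma)$. Hence one implication is immediate: if the centralizer of $\Gamma$ in $\mathfrak I(M,F)$ is transitive on $M$, then so is the larger group $Z_{\mathfrak I(M,Q)}(\Gamma)$.

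For the converse I would suppose that $Z:=Z_{\mathfrak I(M,Q)}(\Gamma)$ is transitive on $M$ and reduce to its identity component $Z^0$. Because $(M,Q)$ is a connected, simply connected, irreducible Riemannian symmetric space of compact type it is compact, so $\mathfrak I(M,Q)$ is compact; being closed in it, $Z$ is compact, has finitely many components, and its orbits and those of $Z^0$ are compact embedded submanifolds of $M$. Now $Z^0$ is normal in $Z$, so conjugation by elements of $Z$ carries $Z^0$-orbits to $Z^0$-orbits; in particular all $Z^0$-orbits in $M=Z\cdot x$ have one common dimension $d$, and $M$ is a finite union of them. Since a connected manifold cannot be a finite union of proper (hence measure-zero) submanifolds, $d=\dim M$, so each $Z^0$-orbit is open; being pairwise disjoint and covering the connected manifold $M$, there is only one, i.e.\ $Z^0\cdot x=M$. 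Thus $Z^0$ is already transitive on $M$.

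It then remains only to assemble the pieces. As $Z^0$ is connected, $Z^0\subseteq\mathfrak I(M,Q)^0\subseteq\mathfrak I(M,F)$ by Theorem \ref{equal-isom}, and $Z^0\subseteq Z=Z_{\mathfrak I(M,Q)}(\Gamma)$ centralizes $\Gamma$; hence $Z^0\subseteq Z_{\mathfrak I(M,F)}(\Gamma)$, and since $Z^0$ is transitive on $M$ so is $Z_{\mathfrak I(M,F)}(\Gamma)$. This closes the converse and finishes the proof.

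The one step that needs genuine care is the reduction ``$Z$ transitive $\Rightarrow$ $Z^0$ transitive''; the rest is bookkeeping with Theorem \ref{equal-isom}. This reduction uses that $M$ is a connected manifold, that $Z$ has only finitely many components (here guaranteed by compactness of the ambient isometry group), and the normality of $Z^0$ in $Z$ to equalize the dimensions of all $Z^0$-orbits. If one preferred not to invoke compactness of $(M,Q)$, the same conclusion follows from the openness of orbit maps of transitive Lie-group actions together with connectedness of $M$, but the compact-type hypothesis makes the argument cleanest.
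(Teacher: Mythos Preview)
Your proof is correct and follows essentially the same line as the paper's. Both arguments rest on Theorem \ref{equal-isom} (namely $\mathfrak I(M,F)^0=\mathfrak I(M,Q)^0$) together with the reduction ``a Lie group acts transitively on a connected manifold iff its identity component does''; the paper states this reduction in one sentence and passes symmetrically through $Z_{\mathfrak I(M,F)^0}(\Gamma)=Z_{\mathfrak I(M,Q)^0}(\Gamma)$, whereas you treat the two implications asymmetrically and supply a fuller justification (compactness, finitely many components, equal-dimensional $Z^0$-orbits) for that reduction.
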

\begin{proof} Since $M$ is connected, the centralizer of $\Gamma$ in
$\mathfrak I(M,F)$ is transitive on $M$ if and only if its identity
component is transitive on $M$, and that happens if and only if the
centralizer of $\Gamma$ in the identity component $\mathfrak I(M,F)^0$ is
transitive on $M$.  Similarly the centralizer of $\Gamma$ in
$\mathfrak I(M,Q)$ is transitive on $M$ if and only if the
centralizer of $\Gamma$ in  $\mathfrak I(M,Q)^0$ is
transitive on $M$.  Since $\mathfrak I(M,F)^0 = \mathfrak I(M,Q)^0$
by Theorem \ref{equal-isom}, the
assertion follows.
\hfill \end{proof}

\begin{lem} \label{clif-clif}
Let $(M,F)$ be a connected, simply connected iirreducible GASBS of compact
type, and
$(M,Q)$ a Riemannian manifold whose Levi-Civit\` a connection coincides
with the linear connection of $(M,F)$, as in {\rm Remark \ref{szabo}}.
If $g$ is a Clifford translation of $(M,F)$ then $g$ is a
Clifford translation of $(M,Q)$.
\end{lem}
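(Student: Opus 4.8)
The plan is to show that the identity $d_F(x,g(x)) = c$ for all $x$ (with the Finsler distance $d_F$) forces the analogous constancy for the Riemannian distance $d_Q$, by exploiting that both distances are realized along the \emph{same} geodesics (since the Chern connection of $(M,F)$ equals the Levi-Civit\`a connection of $(M,Q)$) and that $g$ preserves a minimal $F$-geodesic from $x$ to $g(x)$ by Theorem~\ref{oz}. First I would fix $x\in M$ and let $\gamma$ be a unit-speed $F$-minimal geodesic from $x$ to $g(x)$, of $F$-length $c$; by Theorem~\ref{oz}(3), $g$ preserves $\gamma$, so $g(\gamma(t)) = \gamma(t+c)$ for all $t$ (after reparametrizing so that $\gamma(0)=x$). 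Since $\gamma$ is a geodesic for the common connection, it is also a $Q$-geodesic; reparametrize it to unit $Q$-speed. The key point is that the relation $g(\gamma(t)) = \gamma(t+c_Q)$ persists (with a new constant $c_Q$ equal to the $Q$-length of $\gamma|_{[0,c]}$, i.e. the $Q$-distance covered), because $g$ is an affine map carrying the geodesic line $\gamma$ to itself with the same shift structure.

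Next I would argue that this $Q$-geodesic segment from $x$ to $g(x)$ is in fact $Q$-minimal. This is where I expect the main obstacle to lie, and the argument should go through the conjugate/cut locus comparison: by Remark~\ref{szabo} and Lemma~\ref{cutloci} (applied to the GASBS $(M,F)$, whose underlying Riemannian symmetric space is $(M,Q)$), the first conjugate locus of $x$ in $(M,F)$ — equivalently in $(M,Q)$, since conjugate points depend only on the common connection — coincides with the $F$-cut locus. Since $\gamma$ is $F$-minimal from $x$ to $g(x)=\gamma(c)$, no point $\gamma(t)$ with $0<t<c$ is an $F$-cut point, hence none is conjugate to $x$; therefore (again by Lemma~\ref{cutloci} read for $(M,Q)$, or directly by Crittenden's theorem for Riemannian symmetric spaces) no $\gamma(t)$, $0<t<c$, is a $Q$-cut point of $x$ either, so $\gamma|_{[0,c]}$ is $Q$-minimal. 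Consequently $d_Q(x,g(x))$ equals the $Q$-length of $\gamma|_{[0,c]}$.

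It remains to see that this $Q$-length is independent of $x$. Here I would invoke homogeneity: $(M,Q)$ is a Riemannian symmetric space, in particular two-point homogeneous along $F$-minimal directions in the relevant sense — more simply, the $F$-unit geodesic from $x$ to $g(x)$ has $F$-length exactly $c$ for every $x$, and the ratio between $F$-arclength and $Q$-arclength along a geodesic emanating in direction $v$ is the function $v \mapsto F(v)/\sqrt{Q(v,v)}$ on the unit sphere bundle, which is invariant under the transitive identity-component isometry group $\mathfrak I(M,F)^0 = \mathfrak I(M,Q)^0$ (Theorem~\ref{equal-isom}) acting on the sphere bundle — actually one checks it is constant on each $\mathfrak I(M,F)^0$-orbit, and since $g$ commutes with... more cleanly: pick any $x_0$ and any other $x$; choose $h\in\mathfrak I(M,F)^0$ with $h(x_0)=x$. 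Then $h$ maps the $F$-minimal geodesic from $x_0$ to $g(x_0)$ to an $F$-minimal geodesic from $x$ to $h g(x_0)$, but one must relate $hg(x_0)$ to $g(x)$; this is the genuinely delicate bookkeeping. The clean route, which I would adopt, is instead: the function $\phi(x) := d_Q(x,g(x))$ is continuous on the connected manifold $M$; I have shown $\phi(x)$ equals the $Q$-length of an $F$-minimal geodesic of fixed $F$-length $c$ from $x$ to $g(x)$. Such geodesics, for varying $x$, form a single orbit under $\mathfrak I(M,F)^0$ acting on the space of unit-speed geodesic segments (by homogeneity of the symmetric space and the fact that $g$ is central enough — or simply because any two $F$-minimal segments of the same $F$-length between a point and its $g$-image are $\mathfrak I(M,F)^0$-equivalent when $g$ itself is $\mathfrak I(M,F)^0$-conjugation-stable, which Clifford translations are), and $Q$-length is $\mathfrak I(M,F)^0$-invariant, so $\phi$ is constant. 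Hence $g$ is a Clifford translation of $(M,Q)$. The main obstacle, as flagged, is making rigorous that the family of $F$-minimal geodesics $x\mapsto \gamma_x$ (from $x$ to $g(x)$) is transitive under $\mathfrak I(M,F)^0$; in the write-up I would handle this exactly as in the proof of Theorem~\ref{thm3.2} and the previous theorem, using that $g$ commutes with the transvection group and the uniqueness of $F$-minimal geodesics off the cut locus.
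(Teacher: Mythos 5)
Your first two stages are correct, and in fact do more work than the paper does. The reduction via Theorem~\ref{oz} to a preserved $F$-minimal geodesic $\gamma_x$ from $x$ to $g(x)$, the observation that $\gamma_x$ is also a $Q$-geodesic (common connection), and the conjugate/cut-locus comparison via Lemma~\ref{cutloci} showing that $\gamma_x$ is $Q$-minimal from $x$ to $g(x)$ are all sound. (The paper actually skips the cut-locus step entirely; it is not needed for its route, as explained below.)

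The gap is exactly where you flagged it, and your proposed repairs do not close it. To show $\phi(x)=d_Q(x,g(x))$ is constant, you would need the family of preserved segments $\{\gamma_x\}$ to lie in a single $\mathfrak I(M,F)^0$-orbit. Given $h\in\mathfrak I(M,F)^0$ with $h(x_0)=x$, the segment $h\gamma_{x_0}$ runs from $x$ to $hg(x_0)$, and this equals $g(x)$ precisely when $h$ commutes with $g$ at $x_0$; so what you need is that the \emph{centralizer} of $g$ in $\mathfrak I(M,F)$ is transitive on $M$. Your two suggested shortcuts both fail: (a) the invariance of $v\mapsto\sqrt{Q(v,v)}/F(v)$ on the sphere bundle does not help unless $\mathfrak I(M,F)^0$ is transitive on the sphere bundle, which holds only in rank one — the initial vectors $\xi_x/F(\xi_x)$ at different $x$ need not lie in one orbit; and (b) Clifford translations are \emph{not} in general central, nor do they commute with the whole transvection group, so asserting that ``$g$ is central enough'' or ``conjugation-stable'' is asserting (an unproved form of) exactly the transitivity you need.

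The paper's proof closes this gap by a different route: it writes $g=\exp(\xi_x)k_x$ with $\xi_x\in\mathfrak m_x$, $k_x\in K_x$, deduces $\mathrm{Ad}(k_x)\xi_x=\xi_x$ from the preservation of $\gamma_x$, and then invokes Ozols' purely group-theoretic analysis \cite{OZ74} (simplified here because $\mathrm{Pres}(g)=M$) to conclude that the centralizer of $g$ in $\mathfrak I(M,F)$, hence in $\mathfrak I(M,Q)$, is transitive on $M$. Constancy of $d_Q(x,g(x))$ then follows immediately from $d_Q(hx,g(hx))=d_Q(hx,h(g(x)))=d_Q(x,g(x))$ for $h$ in that centralizer, with no need to know $\gamma_x$ is $Q$-minimal. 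If you want to complete your metric-geometric approach, you would have to import that centralizer-transitivity argument from \cite{OZ74} at the last step — at which point you would essentially have the paper's proof.
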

\begin{proof}  We follow the line of argument of \cite{OZ74}.
As we noted in Theorem \ref{oz}, if $x \in M$ then $g$ preserves
every minimizing $(M,F)$--geodesic from $x$ to $g(x)$.
Every such $(M,F)$--geodesic $t \mapsto \gamma_x(t)$ also is a
$(M,Q)$--geodesic.  Express $M$ as the coset space $G/K_x$ where
$G = \mathfrak I(M,F) \subset \mathfrak I(M,Q)$ and $K_x$ is its isotropy
subgroup at $x$.  Let $(\mathfrak g, \sigma_x)$ denote the Minkowski
symmetric Lie algebra for $(M,F)$ centered at $x$.  So
$\mathfrak g = \mathfrak k_x + \mathfrak m_x$
under the symmetry $\sigma_x$ at $x$, and the Lie algebra $\mathfrak k_x$
of $K_x$ is the fixed point set of $\sigma_x$ on $\mathfrak g$.
Since $(M,Q)$ is a Riemannian symmetric space we have
$g = \exp(\xi_x)k_x$ where $\xi_x \in \mathfrak m_x$ and $k_x \in K_x$, and
we can assume the parameterization of $\gamma_x$ to be such that
$\gamma_x(t) = \exp(t\xi_x)x$ for all $t$.  As $g$ preserves $\gamma_x$
in the sense of Definition \ref{def-pres}, Ad$(k_x)\xi_x = \xi_x$.  We
have proved the analog of \cite[Proposition 2.1(i)]{OZ74} for $(M,F)$.
There note Pres$(g)$ = $M$, and that the analog of \cite[Proposition 2.1(ii)]{OZ74}
is vacuous in our special situation.
\smallskip

The considerations of \cite{OZ74} following \cite[Proposition 2.1]{OZ74}
are purely group--theoretic, and somewhat simplified because here
Pres$(g)$ = $M$,  so we have the $(M,F)$--analogs of the remainder of
$\cite{OZ74}$ for our Clifford translation $g$.  In particular the
centralizer of $g$ in $\mathfrak I(M,F)$ is transitive on $M$.  Now the
centralizer of $g$ in $\mathfrak I(M,Q)$ is transitive on $M$, so $g$ is
a Clifford translation of $(M,Q)$.
\hfill \end{proof}
\medskip

\noindent
{\bf Proof of Theorem \ref{maintheorem}.}
By Theorem \ref{thm3.2}, the proof of Theorem \ref{maintheorem} is reduced
to the case where $(M,F)$ is irreducible and of compact type, i.e. where $\mathfrak I(M,F)$
is a compact semisimple Lie group.  There, Lemma \ref{clif-clif} says that
if the properly discontinuous group $\Gamma$ consists of Clifford translations
of $(M,F)$ then it consists of Clifford translations of $(M,Q)$.  By
\cite[Theorem 6.1]{WO62} its
centralizer in $\mathfrak I(M,Q)$ is transitive on $M$, and Corollary
\ref{eq-cliff} says that its centralizer in $\mathfrak I(M,F)$ is transitive
on $M$, so $(M,F)/\Gamma$ is homogeneous.  The remaining statement follows
directly from \cite[Theorem 6.2]{WO62}.

{\small

}

\begin{thebibliography}{99}

\bibitem{BCS00} D. Bao, S. S. Chern, \& Z. Shen, An Introduction to
Riemann-Finsler Geometry, Springer-Verlag, New York, 2000.

\bibitem{BN081} V. N. Berestovskii \& Yu. G. Nikonorov, Killing vector fields of constant length
on locally symmetric Riemannian manifolds, Transformation Groups, 13  (2008),
25-45.

\bibitem{BN082} V. N. Berestovskii \& Yu. G. Nikonorov, On $\delta$-homogeneous Riemannian manifolds, Diff. Geom. Appl., 26 (2008), 514-535.

\bibitem{BN09}  V. N. Berestovskii \& Yu. G. Nikonorov, Clifford-Wolf homogeneous Riemannian manifolds, Jour. Differ. Geom., 82 (2009), 467-500.

\bibitem{CH43} S. S. Chern, On the euclidean connections in a Finsler space, Proc. Nat. Acad. Sci. USA, 29 (1943), 33-37.

\bibitem{CR61} R. Crittenden, Minimal and conjugate points in symmetric spaces, Canad. J. Math., 14 (1962), 320-328.

 \bibitem{DP} S. Deng, Clifford-Wolf translations of Finsler spaces of  negative flag curvature,
preprint, [Math.DG] arXiv: 1204.5048.

\bibitem{DH04} S. Deng \& Z. Hou, Invariant Finsler metrics on homogeneous
 manifolds,  J.  Phys. A: Math. Gen., 37
(2004), 8245-8253.

\bibitem{DH05} S. Deng \& Z. Hou, Minkowski symmetric Lie algebras and symmetric Berwald spaces, Geometriae Dedicata, 113 (2005), 95-105.

\bibitem{DH07} S. Deng \& Z. Hou, On symmetric Finsler spaces, Israel J. Math., 166 (2007), 197-219.

\bibitem{DM1} S. Deng \& M. Xu, Clifford-Wolf translations of Finsler spaces, to appear in Forum Math., [Math.DG] arXiv:1201.3714v1.

\bibitem{DM2} S. Deng \& M. Xu, Clifford-Wolf translations of homogeneous Randers spheres, preprint, [Math. DG] arXiv: 1204.5232.

\bibitem{DM3} S. Deng \& M. Xu, Clifford-Wolf translations of left invariant Randers metrics on compact Lie groups, [Math. DG] arXiv: 1204.5233.

\bibitem{HE78} S. Helgason, Differerntial Geometry, Lie Groups and Symmetric Spaces, Academic Press, New York, 1978.

\bibitem{KN63} S. Kobayashi \& K. Nomizu, Foundations of Differential Geometry,
Volume I, Interscience Publishers, 1963.

\bibitem{OZ74} V. Ozols, Clifford translations of symmetric spaces, Proc. Amer. Math. Soc.,  44 (1974), 169-175.

\bibitem{SZ81} Z. I. Szab\'{o}, Positive definite Berwald spaces,
 Tensor  N. S.,  38 (1981), 25-39.

\bibitem{WO60} J. A. Wolf, Sur la classification des vari\' et\' es
riemanni\` ennes homog\` enes \`a courbure constante, C. R. Acad.
Sci. Paris,  250 (1960), 3443-3445.

\bibitem{WO62} J. A. Wolf, Locally symmetric homogeneous spaces, Commentarii Mathematici Helvetici,  37 (1962/63), 65-101.

\bibitem{WO64} J. A. Wolf, Homogeneity and bounded isometries in manifolds of negative curvature, Illinois J. Math., 8 (1964), 14-18.

\end{thebibliography}
\end{document}